\numberwithin{equation}{section}
\newtheorem{thm}{Theorem}[section] 
\newtheorem{cor}[thm]{Corollary}
\newtheorem{lem}[thm]{Lemma}
\newtheorem{defn}[thm]{Definition}
\DeclareMathOperator{\Res}{Res}
\DeclareMathOperator{\glninf}{gl^{(n)}_\infty}
\newcommand{\inv}{^{-1}}
\newcommand{\abs}[1]{\lvert#1\rvert}
\newcommand{\fermion}[3]{{\prescript{}{#1}\psi_{(#2)}^{#3}}}
\begin{document}

\markboth{Addabbo and Bergvelt}
{Difference Hierarchies for  $nT$ $\tau$-functions}

\title{Difference Hierarchies for  $nT$ $\tau$-functions.
}

\author{Darlayne Addabbo
}

\address{Department of Mathematics, University of Notre Dame, Notre
  Dame, IN 46556, USA 
}

\author{Maarten Bergvelt}

\address{Department of Mathematics, University of Illinois, Urbana-Champaign, Il 61801, USA
}

\maketitle

\begin{abstract}
  We introduce hierarchies of difference equations (referred to as
  $nT$-systems) associated to the action of a (centrally extended,
  completed) infinite matrix group $GL_{\infty}^{(n)}$ on
  $n$-component fermionic Fock space. The solutions are given by
  matrix elements ($\tau$-functions) for this action. We show that the
  $\tau$-functions of type $nT$ satisfy bilinear equations of length
  $3,4,\dots,n+1$.  The $2T$-system is, after a change of variables,
  the usual $3$ term $T$-system of type $A$. 

  Restriction from $GL_{\infty}^{(n)}$ to a subgroup isomorphic to the loop group
  $LGL_{n}$, defines $nQ$-systems, studied earlier in
  \cite{taubirkhoff} by the present authors for $n=2,3$. 
\end{abstract}

\textbf{Keywords: }{discrete integrable systems, $\tau$-functions, $Q$- and $T$-systems, Gauss factorizations
}

\section{Introduction.}
\label{sec:intro}

There has recently been much interest in discrete integrable systems,
difference equations with many conserved quantities, in the context of,
for instance,  cluster algebras (see \cite{MR2452184},
\cite{MR2551179}, \cite{MR2773889}), or statistical physics (see
\cite{MR2773889}). In particular, the
$T$-system and its reduction, the $Q$-system has been studied in great
detail, with applications in representation theory and
combinatorics. It is known that the $T$- and $Q$-systems are discrete
Hirota equations, \cite{MR2773889}.

Experience in the general theory of integrable systems has shown
that Hirota equations do not occur in isolation: they are found to
appear in families, forming hierarchies of compatible equations.
Experience also shows that such integrable hierarchies are attached to
representation theoretic data. For instance, the KP hierarchy{ (see
e.g., \cite{MR1736222}, \cite{Sa:KPinfDymGr}, \cite{KaPe:LectinfwedgMKP})}
corresponds to the principal construction of the basic representation
of the infinite matrix group $GL_{\infty}$\footnote{ Actually, a
  central extension of a completion of $GL_{\infty}$ is what is needed
for KP. In this introduction, for expository purposes, we will usually ignore
these details.}. The KdV hierarchy, a
reduction of the KP hierarchy, is similarly related to the
loop group of $SL_{2}$, denoted $LSL_{2}$, which is a subgroup of $GL_{\infty}$.

It is then a natural question to ask whether the $T$-system and the
$Q$-system are part of more general hierarchies of difference
equations, and whether one can give a representation theoretic
construction of these hierarchies.\footnote{There are various types of $Q$-systems and $T$-systems, but in this paper we will restrict to $T$-systems and $Q$-systems of type $A$ and generalizations of these.} The first aim of this paper is to
show that the answer to both questions is positive. There are two
hierarchies of difference equations of which the $T$- and $Q$-systems are the
simplest members. Furthermore, these hierarchies are also attached to
representation theoretic data: for the $T$ and $Q$ system we have
(central extensions of) an
infinite matrix group $GL_{\infty}^{(2)}$ and a subgroup isomorphic to $LGL_{2}$
acting on 2-component fermionic Fock space, and these systems are related
in very much the same way as  the KP and KdV hierarchies: we propose the
slogan: ``$T$ is to $Q$ as KP is to KdV''.

The difference between the KP hierarchy and the $T$-system (both
connected to {infinite matrix groups}) lies in the
choice of bosonization. It is well known that different KP like
hierarchies can be constructed depending on the choice of a Heisenberg
subalgebra in the (centrally extended, completed) infinite matrix algebra, each
inequivalent Heisenberg algebra giving rise to a different
bosonization, {see for instance \cite{MR88c:17022},
  \cite{MR1155284}, \cite{MR97c:58061}.} In this paper we do something
slightly different; we choose a unipotent subgroup, instead of a
Heisenberg subalgebra.

Let us briefly sketch the construction of the KP hierarchy in order to
compare it to the construction in the present paper. For the KP
hierarchy, one can start with the fermionic Fock space $F$, the
semi-infinite wedge space of $\mathbb{C}^{\infty}$. On $F$ we have the actions of the matrix
algebra $gl_{\infty}$ and group $GL_{\infty}$, and the centrally
extended completion $a_{\infty}$ of $gl_{\infty}$.
There is a Heisenberg subalgebra in $a_{\infty}$, spanned by elements $\alpha(k)$ such
that
\[
[\alpha(k),\alpha(\ell)]=k\delta_{k,\ell}.
\]
On $F$, we also have a bilinear form $\langle\, ,\rangle$ such that
elementary wedges are orthonormal. Finally, fermionic Fock space has a
grading by charge, $F=\oplus_{k\in\mathbb{Z}}F_{k} $, where $F_{k}$ is
the charge $k$ subspace. Then $ F_{0}$ contains the vacuum vector
$v_{0}$ and one defines a map, called bosonization,
\[
\Phi\colon F_{0}\to \mathbb{C}[t_{1},t_{2},\dots],\quad \omega\mapsto
\langle v_{0},e^{\sum_{k=1}^{\infty}t_{k}\alpha(k)}\omega\rangle.
\]
If $\omega\in F_{0}$ is an element of the $GL_{\infty}$ orbit of
the vacuum, $\omega=gv_{0}$, we call $\Phi(\omega)$ the
$\tau$-function $\tau_{\omega}(t)$, where $ t=(t_{1},t_{2},\dots)$ are
the KP times. A remarkable fact is that
$\tau_{\omega}(t)$ solves an infinite collection of differential equations,
the KP hierarchy, \cite{MR86a:58093}, \cite{Kac:InfDimLA}.

This construction is an instance of the general philosophy of special
functions: interesting functions are matrix elements of an action of a
group\footnote{Or some other algebraic structure such as a quantum
  group, etc., see for instance \cite{MR1275729}, \cite{MR1302644}} $G$ on some representation $V$, with the variables in the
special function obtained by choosing coordinates on $G$ (or a
subgroup, say diagonal elements in a matrix group).  One can think of
the KP times $t=(t_{1},t_{2},\dots)$ as coordinates on a subgroup of
$GL_{\infty}$ (of matrices that are constant along diagonals).

The {starting point for this paper was the} observation that for
the $T$-system, one can choose, {instead of a Heisenberg subalgebra, a}
subgroup of an infinite matrix group $GL_{\infty}^{(2)}$
(acting on 2-component fermionic Fock space), consisting of certain
lower triangular matrices, with coordinates
$c_{k,\ell}, k,\ell\in\mathbb{Z}$. By more or less the
same construction as for the KP hierarchy, one defines
$\tau$-functions. Briefly, we start with an element of the lower triangular
subgroup of $GL_{\infty}^{(2)}$ mentioned above, lift it to an element $g$
of the central extension acting on the 2-component
fermionic Fock space, with vacuum vector $v_{0}$ and bilinear form
$\langle\,,\rangle$. Then the $T$-system
  $\tau$-functions are
  \begin{equation}
\tau_{k}=\langle T^{k}v_{0},gv_{0}\rangle,\quad k\in\mathbb{Z},
\label{eq:19}
\end{equation}
where $T$ is (the lift of) a ``translation group element'' in
$GL_{\infty}^{(2)}$, corresponding to the loop group element
$\begin{bmatrix} -z&0\\0&-z\inv
\end{bmatrix}$ in $LGL_{2}$ embedded in $GL_{\infty}^{(2)}$. See
sections \ref{sec:n-comp-ferm}--\ref{sec:tau-funct-outl} for more details.

The $\tau$-functions for the $T$-system turn out to be determinants of finite matrices
with the $c_{k,\ell}$ as entries.

There are discrete shifts, $\tau_{k}^{(\alpha,\beta)}$,
$\alpha,\beta\in\mathbb{Z}$ of the $\tau$-functions, $\tau_{k}$, see
section \ref{sec:tau-funct-outl}. One shows that the shifted $\tau$-functions satisfy
difference equations, which yield the $T$-system of type $A$: For all
$k\ge 0$ and $\alpha,\beta\in \mathbb{Z}$,
\\[.1cm]
\fbox{
    \begin{minipage}{.96\linewidth}
      \begin{equation*}
\tau_{k}^{(\alpha+1,\beta)}\tau_{k}^{(\alpha,\beta+1)}+
\tau_{k+1}^{(\alpha,\beta+1)}\tau_{k-1}^{(\alpha+1,\beta)}=\tau_{k}^{(\alpha,\beta)}\tau_{k}^{(\alpha+1,\beta+1)},
\tag{\text{$2T$-system}}
\end{equation*}
\end{minipage}
}
\\[.2cm]
see Section \ref{sec:2t-relations}. 

Since these equations are related to the group $GL_{\infty}^{(2)}$
acting on 2-component fermionic Fock space we refer to this system of equations as the $2T$-system.

The main point of this paper is that there is an obvious
generalization of the construction of the $2T$-system we just
sketched. Instead of just considering the action of an infinite matrix
group on 2-component fermionic Fock space, one can define a group
$GL_{\infty}^{(n)}$ acting on $n$-component fermionic Fock space
$F^{(n)}$, and consider a lower triangular subgroup of
$GL_{\infty}^{(n)}$. We define $\tau$-functions of type $nT$ as matrix
elements of $n$-component fermionic Fock space. In general these
$\tau$-functions are of the form $\tau_{\mathbf{k}}^{(\bm \beta)}$,
where $\mathbf{k}$ belongs to the root lattice $A_{n-1}$ and
$\bm \beta\in \mathbb{Z}^{n}$. For $n>2$ these $\tau$-functions
satisfy two types of bilinear equations: the ``short'' relations,
described in Section \ref{sec:2+k-term-relations}, have length
$3, 4, \dots n$ (and are absent for $n=2$), and the ``long''
relations, described in Section \ref{sec:long-relations}, of length
$n+1$.

For instance, for $n=3$ the $\tau$-functions are labeled by pairs and
triples of integers $\mathbf{k}=(k,\ell)$ and $\bm\beta=(\alpha,\beta,\gamma)$. An example of a short
relation of type $3T$ is
\[
  \tau_{k,\ell}^{(\alpha,\beta,\gamma)}\tau_{k,\ell-1}^{(\alpha+1,\beta,\gamma)}=
  \tau_{k,\ell-1}^{(\alpha,\beta,\gamma)}\tau_{k,\ell}^{(\alpha+1,\beta,\gamma)}+
  \tau_{k+1,\ell}^{(\alpha,\beta,\gamma)}\tau_{k-1,\ell-1}^{(\alpha+1,\beta,\gamma)},
\]
and an example of a long relation of type $3T$ is 
\begin{equation*}
  \tau_{k,\ell}^{(\alpha+1,\beta,\gamma)}\tau_{k,\ell}^{(\alpha,\beta+1,\gamma+1)}+
  \tau_{k-1,\ell}^{(\alpha+1,\beta,\gamma)}\tau_{k+1,\ell}^{(\alpha,\beta+1,\gamma+1)}+
  \tau_{k-1,\ell-1}^{(\alpha+1,\beta,\gamma)}\tau_{k+1,\ell+1}^{(\alpha,\beta+1,\gamma+1)}=
  \tau_{k,\ell}^{(\alpha,\beta,\gamma)}\tau_{k,\ell}^{(\alpha+1,\beta+1,\gamma+1)}.
\end{equation*}
The $nT$ $\tau$-functions $\tau_{\mathbf{k}}^{(\bm\beta)}(g)$ depend on an
element $g$ in a lower triangular subgroup of the infinite matrix
group $GL_{\infty}^{(n)}$, which contains a subgroup
isomorphic to the loop group $LGL_{n}$. We say that the
$\tau$-functions corresponding to these subgroups, and the equations
satisfied by them, are of type $nQ$. (The relation between $nT$ and
$nQ$ systems is similar to the relation between the KP and $n$-KdV
hierachies, as mentioned above for $n=2$.) In the case that
$\tau_{\mathbf{k}}^{(\bm\beta)}(g)$ is of type $nQ$, i.e., $g$ belongs to
the loop group, it has a symmetry: in that case we have
\[
\tau_{\mathbf{k}}^{(\bm \beta)}=\tau_{\mathbf{k}}^{(\bm \beta+\bm 1)},
\]
where $\bm 1=(1,1,\dots,1)\in\mathbb{Z}^{n}$. For instance, by imposing $\tau_{k}^{(\alpha,\beta)}=\tau_{k}^{(\alpha+1,\beta+1)}$ in the $2T$-system above, we obtain the following $2Q$ relations: 
For all
$k\ge0,\alpha\in \mathbb{Z}$, putting
$\tau_{k}^{(\alpha)}:=\tau_{k}^{(\alpha,0)}$, 
\\[.1cm]
\fbox
{
\begin{minipage}{.96\linewidth}
  \begin{equation*}
    (\tau_{k}^{(\alpha)})^{2}=
    \tau_{k}^{(\alpha+1)}\tau_{k}^{(\alpha-1)}+\tau_{k+1}^{(\alpha-1)}\tau_{k-1}^{(\alpha+1)}.\tag{\text{$2Q$-system}}
  \end{equation*}
\end{minipage}
}
\\[.1cm]
The equations of type $2Q$ and $3Q$ were obtained in earlier work
\cite{taubirkhoff}.
The construction in \cite{taubirkhoff} is a special case of the
construction in this paper. The main technical difference is that in
\cite{taubirkhoff} we used the Birkhoff factorization of a loop group
element, whereas we use the Gauss factorization of an infinite
invertible matrix in this paper.

\section{$n$-Component Fermionic Fock Space.}
\label{sec:n-comp-ferm}
We start by recalling for the reader's convenience
some notation on fermionic Fock space and related constructions, see
our previous paper \cite{taubirkhoff}; we learned this material from \cite{MR1099256}.

Let $e_{0}=
\begin{pmatrix}
  1\\0\\ \vdots
\end{pmatrix}
$, $e_{1}=
\begin{pmatrix}
  0\\1\\\vdots
\end{pmatrix}
$,\dots, $e_{n-1}=
\begin{pmatrix}
  0\\\vdots\\0\\1
\end{pmatrix}$
be the standard basis of $\mathbb{C}^{n}$ and let
$H^{(n)}=\mathbb{C}^{n}\otimes \mathbb{C}[z,z\inv]$, with basis
$e_{a}^{k}=e_{a}z^{k}$, for $a=0,1,\dots,n-1$ and $k\in\mathbb{Z}$.

Let $F^{(n)}$ be the fermionic Fock space, the semi-infinite wedge space based on $H^{(n)}$. It contains semi-infinite wedges
\[
\omega=w_{0}\wedge w_{1}\wedge w_{3}\wedge\dots,\quad w_{i}\in H^{(n)},
\]
where the $w_{i}$ satisfy some restrictions that we will discuss
momentarily. Semi-infinite wedges obey the usual rules of exterior algebras,
like multilinearity in each factor and antisymmetry under exchange of
two factors.

To formulate the restrictions on the vectors $w_{i}$ appearing in the
wedge $\omega$ above, we introduce the Clifford algebra $Cl^{(n)}$
acting on $F^{(n)}$: it is generated by wedging (creation) operators
$e_{a}z^{k}\wedge$, $0\le a\le n-1, k\in\mathbb{Z}$ and their
adjoints, the contracting (annihilation) operators $i(e_{a}z^{k})$
(defined by $i(e_{a}z^{k})\alpha=\beta$ if
$e_{a}z^{k}\wedge\beta=\alpha$).

Let $v_{0}$ be the vacuum vector
\[
v_{0}=
\begin{pmatrix}
  1\\0\\\vdots\\0
\end{pmatrix}\wedge
\begin{pmatrix}
  0\\1\\\vdots\\0
\end{pmatrix}\wedge\dots\wedge
\begin{pmatrix}
  0\\\vdots\\0\\1
\end{pmatrix}\wedge
\begin{pmatrix}
  z\\0\\\vdots\\0
\end{pmatrix}\wedge
\begin{pmatrix}
  0\\z\\\vdots\\0
\end{pmatrix}\wedge\dots.
\]
$F^{(n)}$ is then defined as the span of the wedges obtained by acting on the
vacuum, $v_{0}$, by monomials in the wedging and contracting operators. To
get a basis for $F^{(n)}$ we specify an ordering on the
wedging/contracting operators acting on $F^{(n)}$. One possibility is
the following:

\begin{defn}\label{Def:ElemntaryWedge}
  An \emph{elementary wedge} in $F^{(n)}$ is an element $\omega=Mv_{0}$,
  where
\[
M=M_{n-1}\dots M_{1}M_{0},\quad M_{a}=M_{a}^{+}M_{a}^{-}, \quad 0\le
a\le n-1,
\]
where 
\[
M_{a}^{\pm}=\fermion{a}{k_{1}}{\pm}\fermion{a}{k_{2}}{\pm}\dots
\fermion{a}{k_{s}}{\pm},\quad k_{1}<k_{2}<\dots<k_{s}\le1,
\] 
is a monomial in $\fermion{a}{k}{\pm}$ for $k\le -1$, ordered in
increasing order from left to right. Here,
$\fermion{a}{k}{+}=e_az^k\wedge$ and
$\fermion{a}{k}{-}=i(e_az^{-k-1}).$
\end{defn}

The statement that the elementary wedges form a basis for $F^{(n)}$
follows from the Poincar\'{e}-Birkhoff-Witt theorem for the Lie
superalgebra underlying the Clifford algebra.

We define a bilinear form, denoted $\langle\,,\rangle$, on
$F^{(n)}$ by declaring the elementary wedges to be orthonormal. Then
the fermion operators satisfy the following adjointness property
\begin{equation}\label{eq:8}
  \langle \fermion{a}{k}{+}v,w \rangle=\langle e_{a}z^{k}\wedge
  v,w\rangle=\langle v,i(e_{a}z^{k})w\rangle=\langle v,\fermion{a}{-k-1}{-}w\rangle,
\end{equation} for all $v,w\in F^{(n)}$.

The $n$-component fermionic Fock space $F^{(n)}$ has a grading by the
Abelian group $\mathbb{Z}^{n}$, i.e., we have a decomposition
$F^{(n)}=\mathop{\oplus}\limits_{\delta\in\mathbb{Z}^{n}}F^{(n)}_{\delta}$.  The vacuum
has degree $(0,\dots,0)$. We introduce a basis in $\mathbb{Z}^{n}$ by
\begin{equation}
\delta_{a}=(0,\dots,0,1,0,\dots,0),\quad \text{1 on the $a$th
  position}, \quad 0\le a\le n-1.
\label{eq:28}
\end{equation}
The grading on $F^{(n)}$ induces a grading on linear maps on
$F^{(n)}$: if $L\colon F^{(n)}\to F^{(n)}$ has the property that there
exists a $\delta\in \mathbb{Z}^{n}$ so that for all
$\Omega\in \mathbb{Z}^{n},$ $L$ restricts to a map
$ F^{(n)}_{\Omega}\to F^{(n)}_{\Omega+\delta}$, then we say that $L$ has
degree $\delta$.  Then the wedging operators $e_{a}z^{k}\wedge$ have
degree $\delta_{a}$, and the contracting operators $i(e_{a}z^{k})$
have degree $-\delta_{a}$. The \emph{total degree} of an element $v\in F^{(n)}$ of
degree $(d_{0},d_{1},\dots,d_{n-1})$ is just the sum $\sum_{a=0}^{n-1}d_{a}$.

\section{Fermion Fields.}
\label{sec:fermion-fields}
It is useful to collect the generators of the Clifford algebra in
generating series. Therefore, define \emph{fermion fields}
\[
\psi_{a}^{\pm}(w)=\sum_{k\in\mathbb{Z}}
{}_{a}\psi_{(k)}^{\pm}w^{-k-1},\quad 0\le a\le n-1.
\] 
(This way of labelling components of fermion fields is inspired by the
theory of vertex algebras, see e.g., \cite{MR1651389}. There
are many other conventions, see for instance \cite{tKvdL:BosFer} or
\cite{MR996026}.)

The fermionic fields, $\psi^{\pm}_{a}(z)$ have degree
$\pm\delta_{a}$ and satisfy anti-commutation relations
\[
[\psi_{a}^{\pm}(z),\psi^{\pm}_{b}(w)]_{+}=0,\quad
[\psi_{a}^{+}(z),\psi_{b}^{-}(w)]_{+}=\delta_{ab}\delta(z,w),
\]
where the formal delta distribution is defined by 
\[\delta(z,w)=\sum_{k\in\mathbb{Z}}z^{k}w^{-k-1}.
\]
From \eqref{eq:8} we find adjointness for fields:
\begin{equation}\label{eq:9}
  \langle \psi_{a}^{+}(z)v,w\rangle=\sum_{k\in\mathbb{Z}}\langle
  \fermion{a}{k}{+}v,w \rangle z^{-k-1}=\sum_{k\in\mathbb{Z}}\langle
  v,\fermion{a}{-k-1}{-}w\rangle z^{-k-1}=\langle
  v,\psi_{a}^{-}(z\inv)w\rangle z\inv.
\end{equation}
\section{Fermionic Translation Operators.}
\label{sec:ferm-transl-oper}
 
Besides the action of fermion operators, $\fermion{a}{k}{\pm},$ we also have the action of fermionic translation operators, $Q_{a}\colon F^{(n)}\to
F^{(n)}$, $0\le a\le n-1$  on $F^{(n)}$. These invertible operators are given by
\[
Q_{a}^{\pm1}v_{0}=\psi_{a}^{\pm}(z)v_{0}\mid_{z=0},
\]
and
\begin{align}
\psi_{a}^{\pm}(z)Q_{a}&=z^{\pm1}Q_{a}\psi_{a}^{\pm1}(z), \label{eq:26}\\
\psi_{a}^{\pm}(z)Q_{b}&=-Q_{b}\psi_{a}^{\pm1}(z),\quad a\ne b, \label{eq:24}\\
Q_{a}Q_{b}&=-Q_{b}Q_{a},\quad a\ne b.  \label{eq:79}
\end{align}
The $Q_{a}^{\pm 1}$ have degree $\pm\delta_{a}$.

The $Q_{a}$ are unitary for the standard bilinear form of $F^{(n)}$:
\begin{equation}
\langle Q_{a}v,w\rangle=\langle v,Q_{a}\inv w\rangle, \quad 0\le a\le n-1.
\label{eq:21}
\end{equation}
The group $\langle Q_{a}\rangle$ of automorphisms of $F^{(n)}$
generated by $Q_{a},$ $0\le a\le n-1$ has a basis labelled by
$\bm\beta\in\mathbb{Z}^{n}$: we define
\begin{equation}
  \label{eq:69}
  Q^{\bm\beta}=Q_{0}^{\beta_{0}}Q_{1}^{\beta_{1}}\dots
  Q_{n-1}^{\beta_{n-1}},\quad \bm\beta=\sum_{a=0}^{n-1} \beta_{a}\delta_{a}.
\end{equation}
The multiplication in this basis is simple.
\begin{lem}\label{lem:CentralExtLattice} Let
  $\bm\beta,\bm\beta^{\prime}\in\mathbb{Z}^{n}$. Then
  \begin{equation}
    \label{eq:70}
    Q^{\bm\beta}Q^{\bm\beta^{\prime}}=\epsilon(\bm\beta,\bm\beta^{\prime})Q^{\bm\beta+\bm\beta^{\prime}},
  \end{equation}
  where 
  \begin{equation}
    \label{eq:71}
    \epsilon(\bm\beta,\bm\beta^{\prime})=(-1)^{\sum_{a=0}^{n-2}\beta^{\prime}_{a}\left(\sum_{b=a+1}^{n-1}\beta_{b}\right)}=
(-1)^{\sum_{a=1}^{n-1}\beta_{a}\left(\sum_{b=0}^{a-1}\beta^{\prime}_{b}\right)}.
  \end{equation}
\end{lem}
\begin{proof}
  This follows directly from the definition \eqref{eq:69}, using the
  relation \eqref{eq:79}.
\end{proof}

\begin{lem}\label{lem:propcocycle}
  The function $\epsilon\colon\mathbb{Z}^{n}\times
\mathbb{Z}^{n}\to\{\pm 1\}$ satisfies
\begin{enumerate}
\item $\epsilon(\bm\beta,\bm0)=\epsilon(\bm0,\bm\beta)=1$
\item (Bimultiplicativity)
  \begin{align*}
    \epsilon(\bm\beta_{1}+\bm\beta_{2},\bm\beta_{3})&=\epsilon(\bm\beta_{1},\bm\beta_{3})\epsilon(\bm\beta_{2},\bm\beta_{3}),\\
    \epsilon(\bm\beta_{1},\bm\beta_{2}+\bm\beta_{3})&=\epsilon(\bm\beta_{1},\bm\beta_{2})\epsilon(\bm\beta_{1},\bm\beta_{3}).
  \end{align*}
\item $\epsilon(-\bm\beta_{1},\bm\beta_{2})=\epsilon(\bm\beta_{1},-\bm\beta_{2})=\epsilon(\bm\beta_{1},\bm\beta_{2})$,
\end{enumerate}

\end{lem}
\begin{proof}
  This lemma follows from the explicit expressions \eqref{eq:71} for $\epsilon$.
\end{proof}
It follows from the lemma that the function
$\epsilon\colon\mathbb{Z}^{n}\times \mathbb{Z}^{n}\to\{\pm 1\}$ is a
cocycle on $\mathbb{Z}^{n}$, well known in representation theory, cf.,
\cite{FrKa:BasRepDu}, and the theory of lattice vertex algebras, cf.,
\cite{MR996026}, \cite{MR1651389}.  The group $\langle Q_{a}\rangle$
generated by the fermionic translation operators is a central
extension of the lattice $\mathbb{Z}^{n}$.

The function $B\colon \mathbb{Z}^{n}\times
\mathbb{Z}^{n}\to\{\pm 1\}$ given by
\[
B(\bm\beta,\bm\beta^{\prime})=\epsilon(\bm\beta,\bm\beta^{\prime})\epsilon(\bm\beta^{\prime},\bm\beta)
\]
controls the commuting of basis elements in $\langle Q_{a}\rangle$: we have
\begin{equation}
  \label{eq:77}
Q^{\bm\beta}Q^{\bm\beta^{\prime}}=B(\bm\beta,\bm\beta^{\prime})Q^{\bm\beta^{\prime}}Q^{\bm\beta}.
\end{equation}
It is well know that\footnote{The Abelian group $\mathbb{Z}^{n}$ has a bilinear form given by
\[
\delta_{a}\cdot\delta_{b}=\delta_{ab},\quad 0\le a,b\le n-1.
\]}
\begin{equation}
  \label{eq:78}
  B(\bm\beta,\bm\beta^{\prime})=(-1)^{\bm\beta\cdot\bm\beta^{\prime}+(\bm\beta\cdot\bm\beta)(\bm\beta^{\prime}\cdot\bm\beta^{\prime})}.
\end{equation}
(To show this observe that $\epsilon$ and hence $B$ are bimultiplicative, as
is the right hand side of \eqref{eq:78}. Then check that \eqref{eq:78}
is true for the basis $\{\delta_{a}\}_{a=0}^{n-1}$ of
$\mathbb{Z}^{n}$. Cf., \cite{MR1651389}.)

In the proof of the long relations for
$nT$ $\tau$-functions, see Theorem \ref{thm:long-relations-1},  we will have to consider tensor products of the form
$Q^{\bm\beta}\otimes Q^{\bm\beta^{\prime}}$ (acting on $F^{(n)}\otimes
F^{(n)}$).

\begin{lem}\label{lem:TensorQ}
  Let $\bm\beta\in\mathbb{Z}^{n}$, $a,b\in\{0,1,\dots,n-1\}$,
  $n_{a},n_{b}\in\mathbb{Z}$. Then
\[
Q^{\bm\beta}Q_{a}^{n_{a}}\otimes
Q^{\bm\beta}Q_{b}^{n_{b}}=\epsilon(\bm\beta,n_{a}\delta_{a}+n_{b}\delta_{b})Q^{\bm\beta+n_{a}\delta_{a}}\otimes Q^{\bm\beta+n_{b}\delta_{b}}.
\]
\end{lem}
\begin{cor}\label{cor:tensorQQ}
\[
Q^{\bm\beta}Q_{a}^{\pm1}\otimes Q^{\bm\beta}Q_{a}^{\mp1}=Q^{\bm\beta\pm\delta_{a}}\otimes Q^{\bm\beta\mp\delta_{a}}.
\]
  
\end{cor}

\section{Root Lattice and Translation Operators.}
\label{sec:root-latt-transl}

The lattice $\mathbb{Z}^{n}$ contains the root lattice $A_{n-1}$:
\[
  A_{n-1}=\sum\limits_{i=1}^{n-1}\mathbb{Z}\alpha_{i},\quad
  \alpha_{i}=\delta_{i-1}-\delta_{i}.
\]
 We will call nonzero elements in $A_{n-1}$ of the form
$\alpha=\sum n_{i}\alpha_{i}$ \emph{positive} roots if
all $n_{i}\ge0$. 
 
There is subgroup of $\langle Q_{a}\rangle$ corresponding to
$A_{n-1}\subset \mathbb{Z}^{n}$, generated
by
\begin{equation}
T_{\alpha_{i}}=Q_{i-1}\inv Q_{i}=Q^{-\alpha_{i}}.
\label{eq:72}
\end{equation}
The degree of $T_{\alpha_{i}}$ is (regrettably)
$\delta_{i}-\delta_{i-1}=-\alpha_{i}$. We will often
write just $T_{i}$ for $T_{\alpha_{i}}$.

The translation operator $T_i$ is unitary, just as the fermionic
translation operators are: from \eqref{eq:21} it follows that
\begin{equation}
  \label{eq:22}
  \langle T_iv, w\rangle=\langle v,T_i\inv w\rangle.
\end{equation}

Just as for $\langle Q_{a}\rangle$ we choose a basis for
$\langle T_{\alpha_{i}}\rangle$: if
$\mathbf{k}=\sum_{i=1}^{n-1}k_{i}\alpha_{i}\in A_{n-1}$ then we define
\begin{equation}
  \label{eq:73}
  T^{\mathbf{k}}=Q^{-\mathbf{k}}.
\end{equation}
To write
\[
T^{\mathbf{k}}=Q_{0}^{K_{0}}Q_{1}^{K_{1}}\dots Q_{n-1}^{K_{n-1}},
\]
we put
\[
-\mathbf{k}=\sum_{a=0}^{n-1}K_{a}\delta_{a},
\]
so that
\begin{equation}
  \label{eq:89}
  K_{a}=-\mathbf{k}\cdot \delta_{a}=k_{a}-k_{a+1},
\end{equation} where $k_0=k_{n}=0$.

\begin{lem}\label{Lem:GeneratorsTranslationGroupIdentities}
  \  \\
\begin{enumerate}
  \item \label{item:7} $Q_{a}^{\pm1}(T^{\mathbf{k}})^{\pm1}=(-1)^{\delta_{a}\cdot\mathbf{k}}(T^{\mathbf{k}})^{\pm1}Q_{a}^{\pm1}$.
\item \label{item:9} For $\mathbf{k},\mathbf{k}^{\prime}\in A_{n-1}$ we have
\[
T^{\mathbf{k}}T^{\mathbf{k}^{\prime}}=
\epsilon(\mathbf{k},\mathbf{k}^{\prime})
T^{\mathbf{k}+\mathbf{k}^{\prime}}.
\]
\end{enumerate}
\end{lem}
\begin{proof}
   Part (\ref{item:7}) is a special case of \eqref{eq:78}, noting that
   $\mathbf{k}\cdot\mathbf{k}\in2\mathbb{Z}$ for $\mathbf{k}\in A_{n-1}$.  
   For Part (\ref{item:9}) we use definition \eqref{eq:73} and Lemma
   \ref{lem:CentralExtLattice}:
\[
     T^{\mathbf{k}}T^{\mathbf{k}^{\prime}}
      =Q^{-\mathbf{k}}Q^{-\mathbf{k}^{\prime}}=\epsilon(-\mathbf{k},-\mathbf{k}^{\prime})Q^{-(\mathbf{k}+\mathbf{k}^{\prime})}=\epsilon(\mathbf{k},\mathbf{k}^{\prime})T^{\mathbf{k}+\mathbf{k}^{\prime}}.
    \]
 \end{proof}

\section{The Lie Algebra $\glninf$ and its Completion and Central Extension.}
\label{sec:lie-algebra-gl_infty}

Define the Lie algebra $\glninf$ as the Lie algebra of linear maps on $H^{(n)}$
generated by $E_{ab}^{k,\ell}$, $0\le a,b\le n-1, k,\ell\in\mathbb{Z}$, where
\[
E_{ab}^{k,\ell}e_{c}z^{m}=\delta_{bc}\delta_{\ell m}e_{a}z^{k}.
\]
Then $\glninf$ also acts on $F^{(n)}$ by
\begin{equation}
E_{ab}^{k,\ell}\mapsto (e_{a}z^{k}\wedge)(i(e_{b}z^{\ell}))={}_{a}\psi^{+}_{(k)}{}_{b}\psi^{-}_{(-\ell-1)}.
\label{eq:40}
\end{equation}
Introduce generating series for Lie algebra elements acting on $F^{(n)}$ by
\begin{equation}
E_{ab}(z,w)=\sum_{k,\ell\in\mathbb{Z}}E_{ab}^{k,\ell}z^{-k-1}w^{\ell}.
\label{eq:18}
\end{equation}
We have an expression in terms of fermion fields for the generating
series:
\begin{equation}
\label{eq:90}
E_{ab}(z,w)=\sum_{k,\ell \in\mathbb{Z}}{}_{a}\psi^{+}_{(k)}{}_{b}\psi^{-}_{(-\ell-1)}z^{-k-1}w^{\ell}
=\psi_{a}^{+}(z)\psi_{b}^{-}(w).
\end{equation} 
The series $E_{ab}(z,w)$ has degree $\delta_{a}-\delta_{b}$.

If we want to discuss, for instance, the loop algebra of $gl_{n}$ in
this context, then  we need to allow infinite sums of the
$E_{ab}^{k,\ell}$. (See Kac-Peterson, \cite{KaPe:LectinfwedgMKP}, for a
detailed discussion of various completions of $gl_{\infty}^{(n)}$ in
the case $n=1$.)  So define $\overline{a}_{\infty}^{\,(n)}$ to be the Lie
algebra of sums $X=\sum_{k,\ell}X_{ab}^{k,\ell}E_{ab}^{k,\ell}$, such that for
each $k\in\mathbb{Z}$ there are only finitely many $X_{ab}^{i,j}\ne 0$
with $i\le k$ and $j\ge k$.  These matrices no longer act on
$H^{(n)}$, so we consider the completion
$\overline{H}^{(n)}=\mathbb{C}^{n}\otimes
\mathbb{C}((z))=\mathbb{C}^{n}\otimes \mathbb{C}[[z]][z\inv]$. The
spaces $H^{(n)}$ and $\overline{H}^{(n)}$ have the same semi-infinite
wedge space $F^{(n)}$.  The matrices in $\bar{a}_{\infty}^{(n)}$ do act on
$\overline{H}^{n}$ but not on $F^{(n)}$, in general. We therefore
change the action of $E_{ab}^{k,\ell}$ on $F^{(n)}$ by putting
\begin{equation}
E_{ab}(z,w)=\colon \psi_{a}^{+}(z)\psi_{b}^{-}(w)\colon,
\label{eq:87}
\end{equation}
where the normal ordering $\colon\,\colon$ moves annihilation
operators to the right of creation operators. This gives rise to a
central extension of $\overline{a}_{\infty}^{(n)}$:
\begin{equation}
0\to \mathbb{C}\to a_{\infty}^{(n)}\overset{\pi}\to\overline{a}_{\infty}^{(n)}\to 0.
\label{eq:17}
\end{equation}
Then $a_{\infty}^{(n)}$ acts on $F^{(n)}$.

\section{The Group $GL^{(n)}_\infty$ and its Completion and Central Extension.}
\label{sec:group-gl_infty-compl}

The group analog of the Lie algebra
  $gl_{\infty}^{(n)}$ is $GL^{(n)}_{\infty}$, the group of invertible infinite matrices
that differ from the identity by an element of
$gl^{(n)}_{\infty}$. $GL^{(n)}_{\infty}$ acts on
$H^{(n)}=\mathbb{C}^{n}\otimes \mathbb{C}[z,z\inv]$. Again we want to
consider some completion where we allow infinite sums. 
Define the group $\overline{A}_{\infty}^{\,(n)}$ as the set of
invertible elements in $\overline{a}_{\infty}^{\,(n)}$, thought of as an
associative algebra. Then $\overline{A}_{\infty}^{\,(n)}$ acts on
$\overline{H}^{\,(n)}$, but not on $F^{(n)}$. There is a central
extension $A_{\infty}^{(n)}$ of $\overline{A}^{\,(n)}_{\infty}$,
\[
1\to \mathbb{C}^{\times}\to A^{(n)}_{\infty}\overset{\pi}\to
\overline{A}_{\infty}^{\,(n)}\to 1,
\]
so that $A_{\infty}^{(n)}$ does act
on $F^{(n)}$, see \cite{KaPe:LectinfwedgMKP}.

\section{Loop Algebra Embedding.}
\label{sec:loop-algebra-embedd}
In this section we describe an embedding of the (Laurent polynomial)
loop algebra $Lgl_{n}=gl_{n}\otimes \mathbb{C}[z,z\inv]$ in
$\overline{a}_{\infty}^{\,(n)}$. There are two reasons for this. The
first is that this embedding allows us to give an explicit description of
the (projections of) the fermionic translation elements $Q_{a}$ and
the translation group elements $T_{i}$. The other reason is that the
hierarchies of difference equations that we attach to
$GL_{\infty}^{(n)}$ have a reduction attached to the loop group
$LGL_{n}$, see section \ref{sec:intro}.

Consider the Laurent polynomial loop algebra $Lgl_{n}$. It is
  generated by elements $z^{k}E_{ab}$ which act in the obvious way on
  $H^{(n)}$, with
  \begin{equation}
z^{k}E_{ab}\mapsto \sum_{\ell\in\mathbb{Z}}E^{k+\ell,\ell}_{ab}.
\label{eq:29}
\end{equation}
This gives an embedding
$Lgl_{n}\hookrightarrow \overline{a}_{\infty}^{\,(n)}$, which gives
rise, using \eqref{eq:17}, to the usual central extension of the loop
algebra:
\[
0\to \mathbb{C}\to \widehat{Lgl}_{n}\overset{\pi}\to Lgl_{n}\to 0.
\]
Now we are going to relate the fermionic translation
  operators $Q_{a}$ to loop group elements. Recall the fermion creation
  operator $\fermion{a}{k}{+}=e_{a}z^{k}\wedge$. Define a map
  $\psi^{+}\colon H^{(n)}\to \operatorname{End}(F^{(n)})$, $v\mapsto
  v\wedge$, so that in particular
  $\fermion{a}{k}{+}=\psi^{+}(e_{a}z^{k})$. We can extend the domain of $\psi^{+}$
  to $\overline{H}^{\,(n)}$. Then a fundamental property of the action
$R\colon A_{\infty}^{(n)}\to \operatorname{Aut}(F^{(n)})$ is that for
all $v\in \overline{H}^{\,(n)}, g\in A_{\infty}^{(n)}$ we have
\begin{equation}
R(g)\psi^{+}(v)R(g\inv)=\psi^{+}(\pi(g)\cdot v).
\label{eq:15}
\end{equation}
Here $\gamma\cdot v$ is the action of
$\gamma\in\overline{A}_{\infty}^{\,(n)}$ on $v\in
\overline{H}^{\,(n)}$, and $\pi\colon A^{(n)}_{\infty}\to
\overline{A}_{\infty}^{\,(n)}$ is the projection.  See  Kac-Peterson, \cite{KaPe:LectinfwedgMKP},
for details.

This allows us to calculate the projections of
the fermionic translation operators. We find, using \eqref{eq:26} and
\eqref{eq:15}, for all $a=0,1,\dots,n-1,$
\begin{equation}
\label{eq:20}
  \begin{aligned}
\overline Q_{a}&=\pi(Q_{a})=z\inv E_{aa}-\sum_{b\ne a}E_{bb},\\
&=\sum_{k\in\mathbb{Z}}\left( E_{aa}^{k-1,k}-\sum_{b\ne a}E_{bb}^{k,k}\right).
\end{aligned}
\end{equation}
Similarly, for all $i=1,\dots, n-1$,
\begin{equation}
  \begin{aligned}
    \overline T_{i}&=\pi(T_{i})=-zE_{i-1,i-1}-z\inv E_{ii}+\sum_{b\ne
      i-1,i}E_{bb},
    \\
    &=\sum_{k\in\mathbb{Z}}\left(
      -E_{i-1,i-1}^{k+1,k}-E_{i,i}^{k-1,k}+\sum_{b\ne
        i,i-1}E_{bb}^{k,k}\right).
  \end{aligned}
\label{eq:27}
\end{equation}
\section{Gauss Factorization and Fermion Matrix Elements.}
\label{sec:gauss-fact-ferm}

The decomposition 
$\overline{H}^{\,(n)}=\overline{H}^{\,(n)}_{+}\oplus
\overline{H}^{\,(n)}_{-}$, where
$\overline{H}^{\,(n)}_{+}=\mathbb{C}^{n}\otimes\mathbb{C}[[z]]$ and
$\overline{H}^{\,(n)}_{-}=\mathbb{C}^{n}\otimes\mathbb{C}[z\inv]z\inv$,
induces a block decomposition on elements
$a\in \overline{a}^{\,(n)}_{\infty}$: every such $a$ can be written as
\begin{equation}
a=
\begin{pmatrix}
  a_{++}&a_{+-}\\a_{-+}&a_{--}
\end{pmatrix},\label{eq:13}
\end{equation}
where $a_{ij}\colon \overline H^{\,(n)}_{j}\to \overline H^{\,(n)}_{i}$, $i,j\in\{-,+\}$.

Recall the group $\overline{A}^{\,(n)}_{\infty}$ of invertible
elements in the associative algebra $\overline{a}^{\,(n)}_{\infty}$;
$\overline{A}_{\infty}^{\,(n)}$ acts on $\overline{H}^{\,(n)}$ and the
central extension $A_{\infty}^{(n)}$ acts on $F^{(n)}$,
see section \ref{sec:group-gl_infty-compl}.

We say that $g\in \overline{A}^{\,(n)}_{\infty}$ has a \emph{Gauss Decomposition} if we
can factor $g$ as follows:
\begin{equation}
g=g_{-}g_{0+},\quad g_{-}=
\begin{pmatrix}
  1&0\\
X_{-+}&1
\end{pmatrix},\quad g_{0+}=
\begin{pmatrix}
  g_{++}&g_{+-}\\0&E_{--}
\end{pmatrix},
\label{eq:39}
\end{equation}
where $g_{-},g_{0+}$ are invertible, with inverses of the same
form. 
Let $\hat g\in A_{\infty}^{(n)}$ be any lift of $g\in\overline{A}^{\,(n)}_{\infty}$, i.e.,
$\pi(\hat g)=g$. Define $\tau=\langle v_{0},\hat gv_{0}\rangle$. Now $g$
has a Gauss factorization if and only if $\tau\ne0$. In this case ($\tau\ne 0$) we can give an explicit formula for the
factor $g_{-}$ of the Gauss factorization of $g$, in terms of matrix
elements with fermion fields inserted.

\begin{lem}
\label{lem1}
  Let $g\in \overline{A}^{\,(n)}_{\infty}$ have a Gauss factorization (so that
  $\tau\ne0$) with negative component $g_{-}=
  \begin{pmatrix}
    1&0\\X&1
  \end{pmatrix}$. Define 
\[
g_{ab}(z,w)=\langle v_{0},
\colon 
\psi_{b}^{+}(w)\psi_{a}^{-}(z)
\colon 
gv_{0}\rangle/\tau.
\]
Then 
\[
X=\sum_{0\le a,b\le n-1}\Res_{z,w}\left(g_{ab}(z,w)E_{ab}(z,w)\right),
\] 
where $E_{ab}(z,w)$ is the generating series \eqref{eq:18} of Lie algebra elements.
\end{lem}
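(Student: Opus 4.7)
The plan is to apply the Gauss factorization to isolate $\hat g_-$ acting on the vacuum, lift the matrix exponential $g_- = \exp N$ to Fock space, and reduce the computation of $g_{ab}(z,w)$ to a first-order Wick-type calculation. Since $g_{0+}$ preserves $H^{(n)}_+$, it acts on $v_0$ by a scalar, which is forced to be $\tau$; hence $gv_0 = \tau \hat g_- v_0$ and
\[
g_{ab}(z,w) = \langle v_0, \colon\psi_b^+(w)\psi_a^-(z)\colon \hat g_- v_0\rangle.
\]
Since $N := g_- - I$ has only the $(-,+)$ block $X$, we have $N^2 = 0$ on $H^{(n)}$, so $g_- = \exp N$; this lifts via \eqref{eq:40} to $\hat g_- = \exp \hat N$ with
\[
\hat N = \sum_{c,d}\sum_{k\le -1,\, l\ge 0} X^{k,l}_{cd}\, {}_c\psi^+_{(k)} {}_d\psi^-_{(-l-1)}.
\]

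The key step is to show that only the $j=1$ term of $\hat g_- v_0 = \sum_{j\ge 0}\tfrac{1}{j!}\hat N^{j} v_0$ contributes, by an excitation-counting argument. Each application of $\hat N$ removes one positive-index mode and adds one negative-index mode of $v_0$, so $\hat N^{j} v_0$ is a combination of elementary wedges differing from $v_0$ by exactly $j$ ``particle-hole pairs''. The normal-ordered bilinear $\colon\psi_b^+(w)\psi_a^-(z)\colon$ is a one-body operator and changes this excitation count by at most two, so $\langle v_0, \colon\psi_b^+(w)\psi_a^-(z)\colon \hat N^{j} v_0\rangle = 0$ for $j \ge 2$. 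The $j = 0$ term vanishes by \eqref{eq:16}, which shows $\colon\psi_b^+(w)\psi_a^-(z)\colon v_0$ is a sum of single-pair states, orthogonal to $v_0$.

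For the surviving $j = 1$ contribution I would derive the commutators
\[
[\colon\psi_b^+(w)\psi_a^-(z)\colon,\, {}_c\psi^+_{(k)}] = \delta_{ac}\, z^{k}\,\psi_b^+(w), \qquad
[\colon\psi_b^+(w)\psi_a^-(z)\colon,\, {}_d\psi^-_{(-l-1)}] = -\delta_{bd}\, w^{-l-1}\,\psi_a^-(z)
\]
directly from the basic fermion anticommutators, and then commute the bilinear past ${}_c\psi^+_{(k)}{}_d\psi^-_{(-l-1)}$ until it hits $v_0$. In the range $k\le -1,\, l\ge 0$ the ``residual'' piece $\langle v_0, {}_c\psi^+_{(k)}{}_d\psi^-_{(-l-1)}\colon\psi_b^+(w)\psi_a^-(z)\colon v_0\rangle$ vanishes by adjointness, since $({}_c\psi^+_{(k)})^\dagger v_0 = {}_c\psi^-_{(-k-1)} v_0 = 0$ for $k\le -1$. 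Combining the two commutator contributions with the explicit evaluation $\langle v_0, \psi_b^+(w){}_d\psi^-_{(-l-1)} v_0\rangle = \delta_{bd} w^{-l-1}$ (valid for $l\ge 0$), and noting that the analogous term with $\psi_a^-(z)$ vanishes for $k\le -1$, one obtains
\[
\langle v_0, \colon\psi_b^+(w)\psi_a^-(z)\colon {}_c\psi^+_{(k)}{}_d\psi^-_{(-l-1)} v_0\rangle = \delta_{ac}\delta_{bd}\, z^k w^{-l-1}.
\]
Summing against $X^{k,l}_{cd}$ yields $g_{ab}(z,w) = \sum_{k\le -1,\, l\ge 0} X^{k,l}_{ab}\, z^k w^{-l-1}$.

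Finally, expanding $E_{ab}(z,w) = \sum_{k',l'} E^{k',l'}_{ab}\, z^{-k'-1} w^{l'}$ and taking $\Res_{z,w}$ (the coefficient of $z^{-1}w^{-1}$) forces $k = k'$ and $l = l'$, so
\[
\sum_{a,b}\Res_{z,w}\bigl(g_{ab}(z,w) E_{ab}(z,w)\bigr) = \sum_{a,b}\sum_{k\le -1,\, l\ge 0} X^{k,l}_{ab} E^{k,l}_{ab} = X.
\]
The main obstacle is the $j = 1$ bookkeeping: verifying that the various ``contact'' terms from the two commutators combine to the clean form $\delta_{ac}\delta_{bd}\, z^k w^{-l-1}$ exactly on the required range $k\le -1,\, l\ge 0$, with the off-range pieces killed by the adjointness identities above.
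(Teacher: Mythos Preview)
Your proof is correct and follows essentially the same strategy as the paper: reduce to $gv_0=\tau\,g_-v_0$, expand $g_-v_0$ in powers of $\hat N$, and show only the linear term survives when paired against the fermion bilinear. The paper does the last step more economically by computing $x_{ab}^{r,s}=\langle E_{ab}^{-r-1,s}v_0,\,g_-v_0\rangle$ directly---since $E_{ab}^{-r-1,s}v_0$ is a single elementary wedge, orthonormality of elementary wedges immediately kills the quadratic-and-higher terms---and then applies the adjointness \eqref{eq:9} to move the fermions across, whereas you reach the same conclusion by explicit Wick-type commutator bookkeeping; these are two phrasings of the same argument.
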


\begin{proof}
  Write $X=\displaystyle\sum_{0\le a,b\le n-1}X_{ab}$, where
\[
X_{ab}=\sum_{r,s\ge0}x^{r,s}_{ab}E_{ab}^{-r-1,s}.
\]
Introduce generating series
\[
x_{ab}(z,w)=\sum_{r,s}x_{ab}^{r,s}z^{-r-1}w^{-s-1}.
\]
so that
\[
X_{ab}=\Res_{z,w}(x_{ab}(z,w)E_{ab}(z,w)).
\]
Here 
\[
\Res_{z,w}=\Res_{z}\Res_{w},
\]
and $\Res_{z}$ is the coefficient of $z\inv$ in a series in $z,z\inv$.

Now we can calculate the coefficients $x^{r,s}_{ab}$ using the
fermionic Fock space $F^{(n)}$. Here we prove the formula for $x_{ab}(z,w)$ in the case that $a\ne b$. Since formulas for the case that $a=b$ are not needed in this paper, we will leave the details to the reader. Consider $g_{-}v_{0}$. First note
that
\begin{align*}
  g_{-}v_{0}=v_{0}+\sum_{a,b}\sum_{r,s\ge0}x_{ab}^{r,s}E_{ab}^{-r-1,s}v_{0}+\dots,
\end{align*}
where the omitted terms are quadratic and higher in the $E_{ab}^{i,j}$s.  Next note
that if $g$ has Gauss factorization, then
$g_{-}v_{0}=\hat gv_{0}/\tau$. Hence
\[
x^{r,s}_{ab}=\langle E_{ab}^{-r-1,s}v_{0},g_{-}v_{0}\rangle=\langle E_{ab}^{-r-1,s}v_{0},gv_{0}\rangle/\tau.
\]
We then get the generating series by \eqref{eq:40} and \eqref{eq:90}
\begin{align*}
x_{ab}(z,w)&=\sum_{r,s\ge0}z^{-r-1}w^{-s-1}\langle
{}_{a}\psi^{+}_{(-r-1)}{}_{b}\psi^{-}_{(-s-1)}v_{0},gv_{0}\rangle/\tau=\\
&=z\inv w\inv\langle 
\psi_{a}^{+}(z\inv)\psi_{b}^{-}(w\inv)
v_{0},gv_{0}\rangle/\tau.
\end{align*}
Then using the adjointness property \eqref{eq:9} we move the normal
ordered product to the other side to find
\[
x_{ab}(z,w)=\langle v_{0},
\psi_{b}^{+}(w)\psi_{a}^{-}(z)
gv_{0}\rangle/\tau,
\]
i.e., $x_{ab}(z,w)=g_{ab}(z,w)$ as we wanted to show.
\end{proof}

\section{A Lower Triangular Subgroup, Baker Functions and Connection Matrices.}
\label{sec:nilpotent-subgroup}

Consider the infinite matrix
\begin{equation}
g=1_{H^{(n)}}+\sum_{a>b}\sum_{k,\ell \in
  \mathbb{Z}}c_{k,\ell}^{a,b}E_{a,b}^{-k-1,\ell}.
\label{eq:11}
\end{equation}
Here we can think of the $c_{k,\ell}^{a,b}$ as complex numbers. If we
impose the condition that all but finitely many of the
$c_{k,\ell}^{a,b}$ are zero, then $g$ is an element of
$GL^{(n)}_{\infty}$. Similarly, by assuming that for each $k$ there
are only finitely many $c_{i,j}^{a,b}\ne 0$ if $i\le k$ and $j\ge
k$, we obtain an element of $\overline A^{\,(n)}_{\infty}$. In either
case there is a unique lift $\hat g\in A^{(n)}_{\infty}$ of $g$, such
that $\pi(\hat g)=g$ and $\langle v_{0},\hat gv_{0}\rangle =1$. These elements
$\hat g$ form a subgroup $\hat {\mathcal{N}}\subset A^{(n)}_{\infty}$
isomorphic to the group $\mathcal{N}$ of elements $g$: the central
extension is trivial over $\mathcal{N}$. Later we will often
silently identify the two groups $\mathcal{N}$ (acting on $\overline
H^{(n)}$) and $\hat{\mathcal{N}}$ (acting on $F^{(n)}$), and we will
not distinguish between $g$ and $\hat g$.

It will also be useful to think of the $c_{k,\ell}^{a,b}$
appearing in $g$ as formal variables, coordinates on the group
$\mathcal{N}$. In this interpretation $g$ is not, for instance, a
linear map on fermionic Fock space, but a map
$F^{(n)}\to F^{(n)}[[c_{k,\ell}^{a,b}]]$. In particular, matrix elements
involving $g$ will then be formal series in the $c_{k,\ell}^{a,b}$. In
practice this causes no problems. In particular, the Gauss
factorization and Lemma \ref{lem1} still work if we use these
``formal'' group elements.

We are going to define shifts and translates
$g^{(\mathbf{k},\bm \beta)}$ of this $g$ in order to define
\emph{Baker functions} and \emph{Connection Matrices} relating the
Baker functions. In the next section we will establish a criterion for
connection matrices to be nonnegative.  Later, we will express the
connection matrices in $\tau$-functions, and use this to derive our $nT$-equations,
bilinear equations for $\tau$-functions, see sections
\ref{sec:2+k-term-relations} and \ref{sec:long-relations}.

Recall the fermionic translation operators $Q_a$ and translation operators $ T_{i}$ acting on fermionic
Fock space $F^{(n)}$, see sections \ref{sec:ferm-transl-oper} and \ref{sec:root-latt-transl}. Denote
by $\overline Q_{a}=\pi(Q_{a})$ and $\overline T_{i}=\pi(T_{i})$ the projection on
the non-centrally extended completed infinite matrix group $\overline
A_\infty^{\,(n)}$, see \eqref{eq:20}, \eqref{eq:27}. Similar to
\eqref{eq:69} and \eqref{eq:73} we define products: if
\begin{align*}
\bm{\beta}&=\sum_{b=0}^{n-1}\beta_{b}\delta_{b}=(\beta_{0},\dots,\beta_{n-1})\in \mathbb{Z}^{n},\quad \mathbf{k}=\sum k_{i}\alpha_{i}\in A_{n-1},
\end{align*}
then
\begin{equation}\label{eq:75}
\begin{aligned}
\overline T^{\,\mathbf{k}}&=\overline T_{1}^{\, k_{1}}\dots  \overline T_{n-1}^{\, k_{n-1}},&
\overline Q^{\,\bm{\beta}}&=\overline Q_{0}^{\, \beta_{0}}\overline  Q_{1}^{\,\beta_{1}}\dots \overline Q_{n-1}^{\,\beta_{n-1}}.
\end{aligned}
\end{equation}
Note that in contrast to $Q_{a}$ and $T_{i}$ the projection $\overline
Q_{a}$ and $\overline T_{i}$ all commute among themselves, so we do not
need to keep track of their ordering. Also note that $(\overline
Q^{\,\bm \beta})\inv=\overline Q^{\,-\bm\beta}$ and $(\overline
T^{\,\mathbf{k}})\inv=\overline T^{\,-\mathbf{k}}$.

We define
\begin{equation}
\begin{aligned}
  g^{(\bm{\beta})}&=
\overline Q^{\,\bm{\beta}}g \,\overline Q^{\,-\bm{\beta}},&
g^{(\mathbf{k},\bm{\beta})}&=\overline T^{-\mathbf{k}}g^{(\bm{\beta})}.
\end{aligned}\label{eq:30}
\end{equation}
We have Gauss factorization 
\[
g^{(\mathbf{k},\bm{\beta})}=g^{(\mathbf{k},\bm{\beta})}_{-}g^{(\mathbf{k},\bm{\beta})}_{0+}.
\]
We use this to define \emph{Baker Functions}
\begin{equation}
\Psi^{(\mathbf{k},\bm{\beta})}=\overline T^{\,\mathbf{k}}\overline Q^{\,-\bm{\beta}}g^{(\mathbf{k},\bm{\beta})}_{-}.
\label{eq:46}
\end{equation}
Next we construct \emph{connection matrices}. The Baker functions are
all invertible, being elements of $\overline A^{\,(n)}_{\infty}$, so
there are elements
$\Gamma^{(\mathbf{k}^{\prime},\bm{\beta}^{\prime})}_{(\mathbf{k},\bm{\beta})}\in
\overline A^{\,(n)}_{\infty}$
such that
\[
  \Psi^{(\mathbf{k}^{\prime},\bm{\beta}^{\prime})}=\Psi^{(\mathbf{k},\bm{\beta})}
\Gamma^{(\mathbf{k}^{\prime},\bm{\beta}^{\prime})}_{(\mathbf{k},\bm{\beta})}.
\]
Explicitly we have
\begin{equation}
\Gamma^{(\mathbf{k}^{\prime},\bm{\beta}^{\prime})}_{(\mathbf{k},\bm{\beta})}=
\left(g_{-}^{(\mathbf{k},\bm{\beta})}\right)\inv \overline
T^{\,\mathbf{k}^{\prime}-\mathbf{k}}\overline
Q^{\,\bm{\beta}-\bm{\beta}^{\prime}}\left( g_{-}^{(\mathbf{k}^{\prime},\bm{\beta}^{\prime})}\right).
\label{eq:59}
\end{equation}
Now we have 
\[
\overline T^{\,\mathbf{k}^{\prime}-\mathbf{k}}\overline
Q^{\,\bm{\beta}-\bm{\beta}^{\prime}}g^{(\mathbf{k}^{\prime},\bm{\beta}^{\prime})}=g^{(\mathbf{k},\bm{\beta})}\overline Q^{\,\bm{\beta}-\bm{\beta}^{\prime}},
\]
so that
\[
\overline T^{\,\mathbf{k}^{\prime}-\mathbf{k}}\overline
Q^{\,\bm{\beta}-\bm{\beta}^{\prime}}g^{(\mathbf{k}^{\prime},\bm{\beta}^{\prime})}_{-}g^{(\mathbf{k}^{\prime},\bm{\beta}^{\prime})}_{0+}=
g^{(\mathbf{k},\bm{\beta})}_{-}g^{(\mathbf{k},\bm{\beta})}_{0+}\overline Q^{\,\bm{\beta}-\bm{\beta}^{\prime}},
\]
and by rearranging factors we find an alternative expression for the
connection matrices
\begin{equation}
  \label{eq:12}
  \Gamma^{(\mathbf{k}^{\prime},\bm{\beta}^{\prime})}_{(\mathbf{k},\bm{\beta})}=\left(g^{(\mathbf{k},\bm{\beta})}_{0+}\right)
\left(\overline  Q^{\,\bm{\beta}-\bm{\beta}^{\prime}}\right)
\left(g^{(\mathbf{k}^{\prime},\bm{\beta}^{\prime})}_{0+}\right)\inv.
\end{equation}

\section{Nonnegativity.}
\label{sec:positivity}

Recall the block decomposition \eqref{eq:13} of an element
$a\in\overline a^{\,(n)}_{\infty}$. We say that $a$ is \emph{nonnegative} if
the block $a_{-+}$ is zero. In other words, if we expand
$a=\sum_{b,c}\sum_{i,j\in\mathbb{Z}}a^{i,j}_{bc}E_{bc}^{i,j}$, then
$a$ is nonnegative if all components $a_{bc}^{-i-1,j}$ with $i,j\ge
0$ are zero. 

In particular, if $g_{0+}$ is a factor in a Gauss factorization, see
\eqref{eq:39}, then $g_{0+}$ is nonnegative. Also, if $x$ belongs to
the loop algebra $Lgl_{n}$ embedded in $\overline a^{\,(n)}_{\infty}$,
then $x$ is nonnegative if it contains only nonnegative powers of
$z$, see \eqref{eq:29}. In particular, a product
$\overline Q^{\,\bm{\beta}}$ is nonnegative if all components
$\beta_{a}$ of $\bm{\beta}=(\beta_{0},\beta_{1},\dots,\beta_{n-1})$
satisfy $\beta_{a}\le0$, see \eqref{eq:20}. Finally, the product of nonnegative elements is again nonnegative. 

This gives us an easy way to check which of the connection matrices
$\Gamma^{(\mathbf{k^{\prime}},\bm{\beta^{\prime}})}_{(\mathbf{k},\bm{\beta})}$,
see \eqref{eq:12}, are nonnegative: these are the connection matrices
such that $\bm{\beta}-\bm{\beta^{\prime}}\le0$, i.e.,
$\Gamma^{(\mathbf{k^{\prime}},\bm{\beta^{\prime}})}_{(\mathbf{k},\bm{\beta})}$
is nonnegative when
\[
(\beta_{a}-\beta_{a}^{\prime})\le0,\quad 0\le a\le n-1.
\]

\section{$\tau$-Functions.}
\label{sec:tau-funct-outl}

\subsection{Introduction and Outline.}

At this point we have all of the ingredients we need to define the
$\tau$-functions of type $nT$.

Recall the  translation group elements $
\overline T^{\,\mathbf{k}}$ and the shifted group element $g^{(\bm{\beta})}$ of
\eqref{eq:30}, used to define the connection matrices. They lift to
operators on $F^{(n)}$, denoted by $T^{\mathbf{k}}$ and
$g^{(\bm\beta)}$, respectively. We define (in the central extension)
\begin{equation}
  \label{eq:80}
  g^{(\mathbf{k},\bm\beta)}=\left(T^{\mathbf{k}}\right)\inv g^{(\bm\beta)},\quad g^{(\bm\beta)}= Q^{\bm\beta}g(Q^{\bm\beta})\inv.
\end{equation}
Then the $\tau$-function of type $nT$ is
\begin{equation}
  \label{eq:36}
  \tau^{(\bm{\beta})}_{\mathbf{k}}(g)=
\langle v_{0}, g^{(\mathbf{k},\bm\beta)}v_{0}\rangle=
\langle T^{\mathbf{k}}v_{0},g^{(\bm\beta)}v_{0}\rangle.
\end{equation}
We will often write $\tau_{\mathbf{k}}^{(\bm\beta)}$ for $\tau_{ \mathbf{k}}^{(\bm\beta)}(g)$.

We will give residue formulas for the $\tau$-functions, see Theorem
\ref{thm:tauk}. In section \ref{sec:shift-fields-formula}, we will also give explicit formulas in
terms of $\tau$-functions for some of the components of the factors,
$g_{-}^{(\mathbf{k},\bm\beta)},g_{0+}^{(\mathbf{k},\bm\beta)},$ of the
connection matrices appearing in \eqref{eq:59} and \eqref{eq:12}. This
will allow us in sections \ref{sec:2+k-term-relations}
and \ref{sec:long-relations} to derive bilinear equations of  lengths $3, 4,
\dots, n+1$ for $\tau$-functions of type $nT$.

\subsection{Degree and Nonvanishing of $\tau$-functions.}
\label{sec:degree-non-vanishing}

In this subsection we will show that many of the
$\tau$-functions defined by \eqref{eq:36} are identically zero, by
investigating  the degree of the elements $T^{\mathbf{k}}v_{0}$ and
$g^{\bm\beta}v_{0}$. 

First we rewrite the element $g$ of \eqref{eq:11} as a product of
exponentials:
\begin{equation}
g=\overrightarrow{\prod_{a>b}}\exp\left(\Gamma_{ab}\right),
\label{eq:37}
\end{equation}
where
\begin{equation}
\Gamma_{ab}=\Res_{z,w}\left( C_{ab}(z,w)E_{ab}(z,w)\right),
\label{eq:82}
\end{equation}
with
\begin{equation}
C_{ab}(z,w)=\sum_{k,\ell\in\mathbb{Z}}c_{k,\ell}^{a,b}z^{-k-1}w^{-\ell-1},
\label{eq:83}
\end{equation}
and $E_{ab}(z,w)$ is the generating series \eqref{eq:18}. Of course,
the $\Gamma_{ab}$ in \eqref{eq:37} do not commute (because the
$E_{ab}(z,w)$ do not), so we need to prescribe an ordering of the
exponential factors. For instance, we can define the ordered product
by
\begin{equation}
  \overrightarrow{\prod_{a>b}}\exp(\Gamma_{ab})=\exp(\Gamma_{a_{1}b_{1}})\exp(\Gamma_{a_{2}b_{2}})\dots
  \exp(\Gamma_{a_{N}b_{N}}),
\label{eq:44}
\end{equation}
where $N=n(n-1)/2$, $(a_{1},b_{1})=(1,0)$ and 
\[
(a_{i+1},b_{i+1})=
\begin{cases}
  (a_{i},b_{i}+1)& \text{ if $b_{i}<a_{i}-1$}\\
  (a_{i}+1,0)&\text{ if $b_{i}=a_{i}-1$}
\end{cases}
\]
From here on, we will use this convention of ordered product over
$a>b$.

Now note that the degree of $\Gamma_{ab}$ is
$\omega_{ab}=\delta_{a}-\delta_{b}$. This can be written as a
telescoping sum: in the case $a>b$ we have
\begin{equation}
  \label{eq:38}
  \omega_{ab}=(\delta_{a}-\delta_{a-1})+(\delta_{a-1}-\delta_{a-2})+\dots+
(\delta_{b+1}-\delta_{b})=-\sum_{k=b+1}^{a}\alpha_{k}
\end{equation}
Hence 
$\omega_{ab}$ is a negative root (for $A_{n-1}$, see section
\ref{sec:root-latt-transl}) for all $a>b$. This means that $gv_{0}$ is an
infinite sum of terms with degree a negative root (or of degree 0, for
$v_{0}$).

Next consider the shifted group element $g^{(\bm\beta)}=Q^{\bm\beta} g
Q^{-\bm\beta}$. Again this can be written as a product 
\begin{equation}
g^{(\bm\beta)}=\overrightarrow{\prod_{a>b}}\exp\left(\Gamma_{ab}^{(\bm\beta)}\right),\quad 
\Gamma_{ab}^{(\bm\beta)}=Q^{\bm\beta}\Gamma_{ab}Q^{-\bm\beta}.
\label{eq:45}
\end{equation}
Now clearly, $\Gamma_{ab}^{(\bm\beta)}$ still has degree
$\delta_{a}-\delta_{b}$ and $g^{(\bm\beta)}v_{0}$ is still a sum of
terms of degree a negative root.

On the other hand
$T^{\mathbf{k}}=T_{1}^{k_{1}}T_{2}^{k_{2}}\dots T_{n-1}^{k_{n-1}}$ has degree
$-\sum_{i=1}^{n-1}k_{i}\alpha_{1}$. Hence, if at least one of the $k_i$
is less than zero, then the degree of $T^{\mathbf{k}}$  contains a
positive root. This proves (by orthogonality of elements of distinct degree in
$F^{(n)}$) the following lemma.

\begin{lem}
  Let $\mathbf{k}=\sum_{i}k_{i}\alpha_{i}$ and suppose that for at
  least one $1\le i\le n-1$ we have   $k_{i}<0$ then
\[
\tau_{\mathbf{k}}^{(\bm\beta)}=\langle T^{\mathbf{k}}v_{0},g^{(\bm\beta)}v_{0}\rangle=0.
\]
\end{lem}

\subsection{Heine Formula for the $\tau$-functions.}
\label{sec:heine-formula-tau}

In this section we calculate the $\tau$-functions,
$\tau_{\mathbf{k}}^{(\bm\beta)}$, in the case that they are not zero.

Assume first that $\bm\beta=\bm0$, so consider $\tau_{\mathbf{k}}=\langle
T^{\mathbf{k}}v_{0},gv_{0}\rangle$. We start with the factorization ~\eqref{eq:37} of $g$ in
exponentials. Expanding the exponentials we see that
\begin{equation}
gv_{0}=\sum_{\mathbf{m}}\Gamma^{(\mathbf{m})}v_{0},\quad 
\Gamma^{(\mathbf{m})}=\left(\overrightarrow{\prod_{a>b}}\frac{(\Gamma_{ab})^{m_{ab}}}{m_{ab}!}\right),
\label{eq:42}
\end{equation}
where $\mathbf{m}=(m_{ab})$ is a triangular array of nonnegative integers
$m_{ab}\ge 0, 0\le b<a\le n-1$. Now the degree of
$\Gamma^{(\mathbf{m})}$ is
\[
\omega^{(\mathbf{m})}=\sum_{a>b}m_{ab}(\delta_{a}-\delta_{b})=-\sum_{i=1}^{n-1}M^{(\mathbf{m})}_{i}\alpha_{i},
\]
where, using \eqref{eq:38},
{
\[
M^{(\mathbf{m})}_{i}=\sum_{0\le \ell<i\le k\le n-1}m_{k\ell}.
\]}
Recalling that the degree of $T^{\mathbf{k}}$ is
$-\sum_{i=1}^{n-1}k_{i}\alpha_{i}$ we see that the only contributions
to the $\tau$-function $\tau_{\mathbf{k}}=\langle T^{\mathbf{k}}v_{0},gv_{0}\rangle$ are the terms
$\Gamma^{(\mathbf{m})}$ where $k_{i}=M^{(\mathbf{m})}_{i}$ for all
$1\le i\le n-1$. We write this condition as
$\mathbf{k}=\mathbf{M}^{(\mathbf{m})}$, where
$\mathbf{M}^{(\mathbf{m})}=(M^{(\mathbf{m})}_{1},
M^{(\mathbf{m})}_{2},\dots, M^{(\mathbf{m})}_{n-1})$. Therefore the
$\tau$-function is a
finite sum:
\[
\tau_{\mathbf{k}}=
\sum_{
\substack{\bm m\\ \mathbf{k}=\mathbf{M}^{(\bm m)}}}
\Gamma^{(\mathbf{m})}_{\mathbf{k}},\quad
\Gamma^{(\mathbf{m})}_{\mathbf{k}}=\langle T^{\mathbf{k}}v_{0},\Gamma^{(\mathbf{m})}v_{0}\rangle.
\]
So we need to calculate each of the terms $\Gamma^{(\mathbf{m})}_{\mathbf{k}}=\langle
T^{\mathbf{k}}v_{0},\Gamma^{(\mathbf{m})}v_{0}\rangle$. 

Recall that $\Gamma^{(\mathbf{m})}$ is a product of powers of the
$\Gamma_{ab}$,
$\Gamma_{ab}=\Res_{z,w}\left( C_{ab}(z,w)E_{ab}(z,w)\right), $ and
$E_{ab}(z,w)=\psi_{a}^{+}(z)\psi_{b}^{-}(w)$. The variables $z,w$ in
the definition of $\Gamma_{ab}$ are dummy variables, and it will be
useful to introduce separate variables $z^{(ab)}_{i},w^{(ab)}_{i}$,
$1\le i\le m_{ab}$ for each factor in $\Gamma^{m_{ab}}_{ab}$. So we
write
\[
(\Gamma_{ab})^{m_{ab}}=\prod_{i=1}^{m_{ab}}
\Res_{z^{(ab)}_{i},w^{(ab)}_{i}}\left(C_{ab}(z^{(ab)}_{i},w^{(ab)}_{i})E_{ab}(z^{(ab)}_{i},w^{(ab)}_{i})\right).
\]
Then
\[
\Gamma^{(\mathbf{m})}_{\mathbf{k}}=\Res_{\mathbf{z},\mathbf{w}}
\left(C^{(\mathbf{m})}(\mathbf{z},\mathbf{w})P^{(\mathbf{m})}_{\mathbf{k}}(\mathbf{z},\mathbf{w})\right),
\]
where $\mathbf{z},\mathbf{w}$ are the sets of all variables
$z^{(ab)}_{i}$, $w_{i}^{(ab)}$, 
 $a>b, 1\le i\le m_{ab}$, 
$\Res_{\mathbf{z},\mathbf{w}}$ is the residue over all the variables
in $\mathbf{z},\mathbf{w}$ and
\[
C^{(\mathbf{m})}(\mathbf{z},\mathbf{w})=\prod_{a>b}\prod_{i=1}^{m_{ab}}C_{ab}(z^{(ab)}_{i},w^{(ab)}_{i}),
\]
and
\begin{multline}
P^{(\mathbf{m})}_{\mathbf{k}}(\mathbf{z},\mathbf{w})=\left\langle
  T^{\mathbf{k}}v_{0},\overrightarrow{\prod_{a>b}}\frac{\prod_{i=1}^{m_{ab}}E_{ab}(z^{(ab)}_{i},w^{(ab)}_{i})v_{0}}{m_{ab}!}\right\rangle=\\
=
\left\langle T^{\mathbf{k}}v_{0},\overrightarrow{\prod_{a>b}}\frac{\prod_{i=1}^{m_{ab}}\psi^{+}_{a}(z^{(ab)}_{i})\psi_{b}^{-}(w^{(ab)}_{i})v_{0}}{m_{ab}!}\right\rangle.
\label{eq:43}
\end{multline}

The idea is now to reduce the term
$\langle T^{\mathbf{k}}v_{0},\Gamma^{(\mathbf{m})}v_{0}\rangle$ as a
product of matrix elements, each of which contains only
$Q_{a}^{K_{a}}$ on the left hand side, and fermion fields
$\psi^{{\pm}}_{a}(z)$ of the same type $a$ on the right hand side.

Rewrite the product of fermion fields in the  RHS of \eqref{eq:43} as
\begin{equation}
\overrightarrow{\prod_{a>b}}\prod_{i=1}^{m_{ab}}\psi^{+}_{a}(z^{(ab)}_{i})
\psi_{b}^{-}(w^{(ab)}_{i})=(-1)^{G(\mathbf{m})}\Psi_{0}\Psi_{1}\dots\Psi_{n-1},
\label{eq:74}
\end{equation}
where
\[
\Psi_{a}=\overrightarrow{\prod_{a>b}}\prod_{i=1}^{m_{ab}}\psi_{a}^{+}(z^{(ab)}_{i}) 
\overrightarrow{\prod_{c>a}}\prod_{i=1}^{m_{ca}}\psi_{a}^{-}(w^{(ca)}_{i}).
\]
Here we use the same ordering on fermion fields in the variables
$z^{(ab)}_{i}$ and $w^{(ca)}_{i}$ as in
\eqref{eq:44}. 

{To see that \eqref{eq:74} indeed holds, note that we
  need to reorder the fermion fields on the LHS in order to achieve
  the form of the RHS. In this process (because of the way we have defined the ordered product, see (\ref{eq:44})), we never need to move a
  positive fermion field, $\psi_{a}^{+}(z)$, past a negative fermion
  field, $\psi_{a}^{-}(w)$, of the same type $a$, and vice versa. This
  means that no delta function terms (see \eqref{eq:9}) show up, and
  the result of the reordering is just a sign denoted by
  $(-1)^{G(\mathbf{m})}$. To determine this sign observe that the same sign would appear if we replaced each fermion field $\psi_{a}^{\pm}(z)$ by the corresponding
  fermionic translation operator $Q_{a}^{\pm1}$ and reordered the resulting product in the
basis $Q^{\bm \beta}$ of \eqref{eq:69}. That is,
\[
\prod_{a>b}\prod_{i=1}^{m_{ab}}Q_{a}Q_{b}\inv=(-1)^{G(\mathbf{m})}Q^{\omega^{\mathbf{m}}},\quad \omega^{\mathbf{m}}=\sum_{a>b}m_{ab}(\delta_{a}-\delta_{b}),
\]
where $(-1)^{G(\mathbf{m})}$ is the same sign as the one appearing in (\ref{eq:74}). This means that we
can express the sign in terms of the cocycle $\epsilon$ of Lemma
\ref{lem:CentralExtLattice}. Since the formula does not seem
particularly enlightening, we leave this to the interested reader.
}

Now recall that
\[
T^{\mathbf{k}}=Q^{-\mathbf{k}}=Q_{0}^{K_{0}}Q_{1}^{K_{1}}\dots Q_{n-1}^{K_{n-1}},
\]
where $K_{a}=-\mathbf{k}\cdot \delta_{a}=k_{a}-k_{a+1}$ with $k_{0}=k_n=0$.  Using \eqref{eq:74} we
find, also using the factorization lemma of
multicomponent fermion fields \cite{taubirkhoff},
\[
  \begin{aligned}
    P^{(\mathbf{m})}_{\mathbf{k}}(\mathbf{z},\mathbf{w})&=
(-1)^{G(\mathbf{m})}
\left\langle  Q_{0}^{K_{0}}\dots
      Q_{n-1}^{K_{n-1}}v_{0},\Psi_{0}\dots\Psi_{n-1}v_{0}\right\rangle=\\&=
(-1)^{G(\mathbf{m})}\prod_{a=1}^{n-1}\left\langle
      Q_{a}^{K_{a}}v_{0},\Psi_{a}v_{0}\right\rangle.
  \end{aligned}
\]
Now, by a formula \cite{taubirkhoff} for the matrix elements of one-component
fermions, we have
\begin{equation}
\left\langle
      Q_{a}^{K_{a}}v_{0},\Psi_{a}v_{0}\right\rangle=
\prod\left(z^{(ab)}_{i}-z^{(ab^{\prime})}_{j}\right) 
\prod\left( w^{(ba)}_{i}-w^{(b^{\prime}a)}_{j}\right) 
\prod\left( z^{(ab)}_{i}-w^{(ca)}_{j}\right)\inv
\label{eq:88}
\end{equation}
Here, the product $\prod\left( z^{(ab)}_{i}-w^{(ca)}_{j}\right)^{-1}$ is over all $b,c,i,j$ with $c>a$, $b<a$ and $1\le i\le m_{ab}$ and $1\le j\le m_{ca}$. The product $\prod\left(z^{(ab)}_{i}-z^{(ab^{\prime})}_{j}\right)$ is over all $b,b^{\prime}, i,j$ with $b<b^{\prime}$, and $1\le i\le m_{ab}$ and $1\le j\le m_{ab^\prime}$ and all $b,b^\prime, i, j$ with $b=b^{\prime}$ and $1\le i<j\le m_{ab}$. Similarly, the product $\prod\left( w^{(ba)}_{i}-w^{(b^{\prime}a)}_{j}\right)$ is over all $b,b^\prime, i, j$ with $b< b^{\prime}$, $1\le i\le m_{ba}$ and $1\le j\le m_{b^\prime a}$, and $b=b^\prime$ with $1\le i<j\le m_{ba}$. (Here, we expand $\left( z^{(ab)}_{i}-w^{(ca)}_{j}\right)^{-1}$ in positive powers of $w^{(ca)}_{j}$.)
Putting this all together gives us the following theorem.
\begin{thm} \label{thm:tauk}
  \[
\tau_{\mathbf{k}}=\sum_{\mathbf{m}\atop \mathbf{M}^{\mathbf{m}}=\mathbf{k}}\Gamma^{(\mathbf{m})}_{\mathbf{k}},
\]
where
\begin{multline*}
  \Gamma^{(\mathbf{m})}_{\mathbf{k}}=
(-1)^{G(\mathbf{m})}
\Res_{\mathbf{z},\mathbf{w}}
\left[\prod_{a>b}\frac{\prod_{i=1}^{m_{ab}}
  C_{ab}(z^{(ab)}_{i},w^{(ab)}_{i})}{m_{ab}!} \right.\times\\
\times\left.\left(\prod\left(
      z^{(ab)}_{i}-z^{(ab^{\prime})}_{j}\right) \prod\left(
      w^{(ba)}_{i}-w^{(b^{\prime}a)}_{j}\right) \prod\left(
      z^{(ab)}_{i}-w^{(ca)}_{j}\right)\inv\right)\right].
\end{multline*}
\end{thm}
Finally we turn to $\tau_{\mathbf{k}}^{(\bm\beta)}=\langle
T^{\mathbf{k}}v_{0},g^{(\bm\beta)}v_{0}\rangle$. Recall that
$g^{(\bm\beta)}=Q^{\bm\beta}g(Q^{\bm\beta})\inv$, see~\eqref{eq:45}. Using an
expansion similar to \eqref{eq:42} we get
\[
\tau_{\mathbf{k}}^{(\bm\beta)}=\sum_{\mathbf{m}\atop
  \mathbf{k}=\mathbf{M}^{(\mathbf{m})}}\Gamma^{(\bm\beta,\mathbf{m})}_{\mathbf{k}},\quad
\Gamma^{(\bm\beta,\mathbf{m})}_{\mathbf{k}}=\langle T^{\mathbf{k}}v_{0},\Gamma^{(\bm\beta,\mathbf{m})}v_{0}\rangle,
\]
where
\[
\Gamma^{(\bm\beta,\mathbf{m})}=\left(\overrightarrow{\prod_{a>b}}
\frac{\left(\Gamma^{(\bm\beta)}_{ab}\right)^{m_{ab}}}{m_{ab}!}\right),\quad
\Gamma^{(\bm\beta)}_{ab}=Q^{\bm\beta}\Gamma_{ab}(Q^{\bm\beta})\inv.
\]
If $\bm\beta=(\beta_{0},\beta_{1},\dots,\beta_{n-1})$ write
$\abs{\beta}=\sum_{a=0}^{n-1}\beta_{a}$. (This is the total degree of
$\bm\beta$, see section \ref{sec:ferm-transl-oper}.) Also, if
$\bm\gamma=(\gamma_{0},\dots,\gamma_{n-1})$ define
$\bm\beta\cdot\bm\gamma=\sum_{a=0}^{n-1}\beta_{a}\gamma_{a}$,
\begin{lem}
  \begin{itemize}
  \item
    $Q^{\bm\beta}\psi_{a}^{\pm}(z)(Q^{\bm\beta})\inv=(-1)^{\abs{\bm\beta-\beta_{a}\delta_{a}}}z^{{\mp\bm\beta\cdot\delta_{a}}}\psi_{a}^{\pm}(z)$.
  \item
    $Q^{\bm\beta}E_{ab}(z,w)(Q^{\bm\beta})\inv=(-1)^{\bm\beta\cdot(\delta_{a}-\delta_{b})}
z^{-\bm\beta\cdot\delta_{a}}
w^{\bm\beta\cdot    \delta_{b}}
E_{ab}(z,w)$.
  \end{itemize}
\end{lem}
Define
\[
C_{ab}^{(-\beta_{a},\beta_{b})}(z,w)=\sum_{k,\ell\in\mathbb{Z}} C^{(a,b)}_{k-\beta_{a},\ell+\beta_{b}}z^{-k-1}w^{-\ell-1}=z^{{-}\beta_{a}}w^{\beta_{b}}C_{ab}(z,w).
\]
Then
\begin{lem}
  \[
\Gamma^{(\bm\beta)}_{ab}=(-1)^{\bm\beta\cdot(\delta_{a}-\delta_{b})}\Res_{z,w} \left(C^{(-\beta_{a},\beta_{b})}_{ab}(z,w)E_{ab}(z,w)\right).
\]
\end{lem}
The upshot is that the formula for
$\tau_{\mathbf{k}}^{(\bm\beta)}$ is the same as the one in Theorem
\ref{thm:tauk}, but with $C_{ab}(z^{(ab)}_{i},w^{(ab)}_{i})$ replaced
by $(-1)^{\bm\beta\cdot(\delta_{a}-\delta_{b})} C_{ab}^{(-\beta_{a},\beta_{b})}(z^{(ab)}_{i},w^{(ab)}_{i})$.

\section{Shift Fields and the Formula for the Negative Component of the Gauss Factorization.}
\label{sec:shift-fields-formula}

Recall that the Baker function was given by
\[
\Psi_{\mathbf{k}}^{(\bm\beta)}=\overline {T}^{\,\mathbf{k}}\overline Q^{\,-\bm\beta}g_{-}^{(\mathbf{k},\bm\beta)},
\]
where $g_{-}^{(\mathbf{k},\bm\beta)}$ is the minus component of the Gauss
  factorization of $g^{(\mathbf{k},\bm\beta)}$, see \eqref{eq:46}. In
  this section we give a formula for $g_{-}^{(\mathbf{k},\bm\beta)}$ in
  terms of ``shift fields''. To explain what these are, recall the
  coordinates $c^{a,b}_{k,\ell}$ on the lower triangular subgroup
  $\mathcal{N}$, see \eqref{eq:11}. Then define multiplicative maps on
  these coordinates by
\[
S_{ab}^{\alpha,\beta}c^{a^{\prime},b^{\prime}}_{k,\ell}=
\delta_{aa^{\prime}}\delta_{bb^{\prime}}c^{a,b}_{k+\alpha,\ell+\beta},
\quad 
S_{ab}^{\alpha,\beta}(1)=0.
\]
Then define \emph{shift fields}
\[
S_{ab}^{\pm1,0}(z)=\left(1-\frac{S_{ab}^{1,0}}{z}\right)^{\pm 1},\quad
S_{ab}^{0,\pm1}(w)=\left(1-\frac
{S^{0,1}_{ab}}{w}\right)^{\pm1}.
\]
Here we expand in positive powers of $1/z$ and $1/w$.

The shift fields act on the generating series $C_{ab}(z,w)$ of \eqref{eq:83} by
\begin{equation}
  \label{eq:84}
  S^{\pm1,0}_{ab}(z) C_{cd}(z_{1},w_{1})=
      (1-\frac{z_{1}}{z})^{\pm\delta_{ac}\delta_{bd} } C_{cd}(z_{1},w_{1}),
\end{equation}
and
\begin{equation}
\label{eq:85}
  S^{0,\pm1}_{ab}(w) C_{cd}(z_{1},w_{1})=
    (1-\frac{w_{1}}{w})^{\pm \delta_{ac}\delta_{bd}} C_{cd}(z_{1},w_{1}).
\end{equation}
Finally define
\[
S_{a}^{\pm}(z)=\prod_{a>b}S^{\pm1,0}_{ab}(z)\prod_{b>a}S_{ba}^{0,\mp 1}(z).
\]
Then
\begin{equation}
  \label{eq:86}
  S^{\pm}_{a}(z)C_{cd}(z_{1},w_{1})=(1-z_{1}/z)^{\pm\delta_{ac}}(1-w_{1}/z)^{\mp\delta_{ad}}C_{cd}(z_{1},w_{1}).
\end{equation}
Indeed, if $a\ne c,d$ the RHS of (13.3) is just $C_{cd}(z_{1},w_{1})$. In the case $a=c$ we have
\begin{equation*}
  z^{\pm 1}S^{\pm}_{a}(z)C_{cd}(z_{1},w_{1})=
  z^{\pm 1}S^{\pm}_{a}(z)C_{ad}(z_{1},w_{1})=
  z^{\pm 1}S^{\pm1,0}_{ad}(z)C_{ad}(z_{1},w_{1})=
  z^{\pm 1}(1-z_{1}/z)^{\pm1}C_{ad}(z_{1},w_{1}),
\end{equation*}
by \eqref{eq:84}. Similarly, if $a=d$, we have
\begin{equation*}
  z^{\mp 1}S^{\pm}_{a}(z)C_{cd}(z_{1},w_{1})=
  z^{\mp 1}S^{\pm}_{a}(z)C_{ca}(z_{1},w_{1})=
  z^{\mp 1}S^{0,\mp1}_{ca}(z)C_{ca}(z_{1},w_{1})=
  z^{\mp 1}(1-w_{1}/z)^{\mp 1}C_{ca}(z_{1},w_{1}),
\end{equation*}
by \eqref{eq:85}.

The factors $(z-z_{1})^{\pm\delta_{ac}}(z-w_{1})^{\mp\delta_{ad}}$  also show up in certain matrix
elements, involving insertions of fermion fields.

\begin{lem}
  \label{lem:FermionEmatrixelmnt} For $c>d$
  \begin{equation*}
    \langle Q_{a}^{\pm1}T^{\delta_{d}-\delta_{c}}v_{0},
    \psi_{a}^{\pm}(z)E_{cd}(z_{1},w_{1})v_{0}\rangle=-(z-z_{1})^{\pm\delta_{ac}}(z-w_{1})^{\mp\delta_{ad}}
    \langle T^{\delta_{d}-\delta_{c}}v_{0},
    E_{cd}(z_{1},w_{1})v_{0}\rangle,
\end{equation*}
 where the $E_{cd}(z_{1},w_{1})$ are  the generating series \eqref{eq:18}.
\end{lem}

The proof of the lemma is a calculation of matrix elements of fermion
fields.

The generating series $C_{cd}(z,w)$ appears in
$\tau$-functions and other matrix elements together with the Lie
algebraic generating series $E_{cd}(z,w)$ in the combination
$\Gamma_{cd}=\Res_{z,w}\left(C_{cd}(z,w)E_{cd}(z,w)\right)$, see
\eqref{eq:82}.

\begin{cor}\label{corShiftFermionInsert} If $c>d$
  \[
\langle Q_{a}^{\pm1}T^{\delta_{d}-\delta_{c}}v_{0},
\psi_{a}^{\pm}(z)\Gamma_{cd}v_{0}\rangle=-z^{\pm \delta_{ac}}z^{\mp \delta_{ad}}S_{a}^{\pm}(z)
\langle T^{\delta_{d}-\delta_{c}}v_{0}, \Gamma_{cd}v_{0}\rangle.
  \]
\end{cor}

The proof of the Corollary follows from the combination of
\eqref{eq:86} and Lemma \ref{lem:FermionEmatrixelmnt}.

So the upshot is that the shift fields ``correspond'' to insertion of
fermion fields.

\begin{thm} \label{thm:gminus}
Let $g_{-}^{(\mathbf{k},\bm\beta)}(z,w)_{ab}=\Res_{z,w}\left(g_{ab}(z,w)E_{ab}(z,w)\right)$, where $g_{ab}(z,w)$ is the series in Lemma \ref{lem1} in the case that $g=g^{(\mathbf{k},\bm{\beta})}$. For $a\ne b$
  \[
g_{-}^{(\mathbf{k},\bm\beta)}(z,w)_{ab}=(-1)^{\mathbf{k}\cdot(\delta_{a}-\delta_{b})}\epsilon(\delta_{a}-\delta_{b},\mathbf{k})\epsilon(\delta_a,\delta_b)
\frac{S_{a}^{-}(z)S_{b}^{+}(w)\tau_{\mathbf{k}+\delta_{b}-\delta_{a}}^{(\bm\beta)}}{zw\tau_{\mathbf{k}}^{(\bm\beta)}}.
\]
\end{thm}
\begin{proof} We use Lemma \ref{lem1} to calculate $g_{ab}(z,w)$ for $g=g^{(\mathbf{k},\bm{\beta})}$ in the case where $a\ne b$.
  \begin{align*}
g_{ab}(z,w)&=\langle
             v_{0},\psi_{b}^{+}(w)\psi_{a}^{-}(z)(T^{\mathbf{k}})\inv g^{(\bm\beta)}v_{0}\rangle/\tau_{\mathbf{k}}^{(\bm\beta)}=\\
           &=\epsilon(\mathbf{k},-\mathbf{k})\langle
             v_{0},\psi_{b}^{+}(w)\psi_{a}^{-}(z)Q^{\mathbf{k}} g^{(\bm\beta)}v_{0}\rangle/\tau_{\mathbf{k}}^{(\bm\beta)},
  \end{align*}
since $(T^{\mathbf{k}})\inv=\epsilon(\mathbf{k},-\mathbf{k})T^{-\mathbf{k}}=\epsilon(\mathbf{k},-\mathbf{k})Q^{\mathbf{k}}$, see Lemma
\ref{Lem:GeneratorsTranslationGroupIdentities}, Part (2). Now 
\[
\psi_{b}^{+}(w)\psi_{a}^{-}(z) Q_{c}^{k}=
\begin{cases}
  (-z)^{-k}Q_{c}^{k}\psi_{b}^{+}(w)\psi_{a}^{-}(z)
  &\text{ if $c=a$}\\
(-w)^{k}Q_{c}^{k}\psi_{b}^{+}(w)\psi_{a}^{-}(z)
  &\text{ if $c=b$}\\
Q_{c}^{k}\psi_{b}^{+}(w)\psi_{a}^{-}(z)
  &\text{ if $c\ne a$ and $c\ne b$}.
\end{cases}
\]
So overall we have
\[
\psi_{b}^{+}(w)\psi_{a}^{-}(z)Q^{\mathbf{k}}=
(-z)^{-\mathbf{k}\cdot \delta_{a}}(-w)^{\mathbf{k}\cdot \delta_{b}}
Q^{\mathbf{k}}\psi_{b}^{+}(w)\psi_{a}^{-}(z).
\]
Hence
\begin{align*}
g_{ab}(z,w)&=
(-z)^{-\mathbf{k}\cdot \delta_{a}}(-w)^{\mathbf{k}\cdot \delta_{b}}  \epsilon(\mathbf{k}, -\mathbf{k})
\langle
v_{0},Q^{\mathbf{k}}\psi_{b}^{+}(w)\psi_{a}^{-}(z)g^{(\bm\beta)}v_{0}\rangle/\tau_{\mathbf{k}}^{(\bm\beta)}=\\
&=(-z)^{-\mathbf{k}\cdot \delta_{a}}(-w)^{\mathbf{k}\cdot \delta_{b}}
\langle T^{\mathbf{k}}v_{0},\psi_{b}^{+}(w)\psi_{a}^{-}(z)g^{(\bm\beta)}v_{0}\rangle/\tau_{\mathbf{k}}^{(\bm\beta)}.
\end{align*}
Now \begin{multline*}T^{\mathbf{k}}=Q_{b}Q_{a}\inv Q_{a}Q_{b}\inv T^{\mathbf{k}}=Q_{b}Q_{a}\inv Q_{a}Q_{b}\inv Q^{-\mathbf{k}}=Q_b Q_a \inv Q_a \epsilon(-\delta_b,-\mathbf{k})Q^{-\mathbf{k}-\delta_b}=\\=Q_b Q_a \inv \epsilon(-\delta_b,-\mathbf{k})\epsilon(\delta_a,-\mathbf{k}-\delta_b)Q^{-\mathbf{k}-\delta_b+\delta_a}=\epsilon(\delta_a-\delta_b,\mathbf{k})\epsilon(\delta_a,\delta_b)Q_bQ_a\inv T^{\mathbf{k}+\delta_b-\delta_a},\end{multline*} by
Lemma \ref{Lem:GeneratorsTranslationGroupIdentities} and Lemma \ref{lem:propcocycle}, so 
\begin{align*}
g_{ab}(z,w)&=
(-z)^{-\mathbf{k}\cdot \delta_{a}}(-w)^{\mathbf{k}\cdot \delta_{b}}
\epsilon(\delta_a-\delta_b,\mathbf{k})\epsilon(\delta_a,\delta_b)\times\\
&\qquad\qquad\times\langle Q_{b}Q_{a}\inv T^{\mathbf{k}+\delta_{b}-\delta_{a}}
v_{0},\psi_{b}^{+}(w)\psi_{a}^{-}(z)g^{(\bm\beta)}v_{0}\rangle/\tau_{\mathbf{k}}^{(\bm\beta)}=\\
&=(-1)^{\mathbf{k}\cdot(\delta_{b}-\delta_{a})}
\epsilon(\delta_a-\delta_b,\mathbf{k})\epsilon(\delta_a,\delta_b)\times\\
&\qquad\qquad\times S_{b}^{+}(w)S_{a}^{-}(z)
\langle  T^{\mathbf{k}+\delta_{b}-\delta_{a}}v_{0},g^{(\bm\beta)}v_{0}\rangle
/zw\tau_{\mathbf{k}}^{(\bm\beta)}=\\
&=(-1)^{\mathbf{k}\cdot(\delta_{b}-\delta_{a})}
\epsilon(\delta_a-\delta_b,\mathbf{k})\epsilon(\delta_a,\delta_b)
S_{b}^{+}(w)S_{a}^{-}(z)
\tau_{\mathbf{k}+\delta_{b}-\delta_{a}}^{(\bm \beta)}
/zw\tau_{\mathbf{k}}^{(\bm\beta)},
\end{align*}
by applying Corollary \ref{corShiftFermionInsert}.

\end{proof}

\section{The Short Relations.}
\label{sec:2+k-term-relations}

In this section we are going to derive bilinear equations for the
$\tau$-functions of type $nT$. These equations will have $2+k$ terms,
where $1\le k\le n-2$. We will refer to these as the short relations
of type $nT$, as they have up to $n$ terms. In the next section we
will discuss relations of length $n+1$.

Pick a $k$-element subset $I\subset \{0,1,\dots,n-1\}$ and define
\[
\delta_{I}=\sum_{i\in I}\delta_{i}.
\]
Next, given $I$, pick an element
$\delta_{J}=\sum_{a=0}^{n-1}j_{a}\delta_{a}\in\mathbb{Z}^{n}$, such
that $j_{a}\ge0$ and $\abs{\delta_{j}}=\sum_{a=0}^{n-1}j_{a}=k$. Then
define a root $\rho\in A_{n-1}$ by $\rho=\delta_{I}-\delta_{J}$. Then
we have
\begin{equation}
\overline T^{\,\rho}\overline Q^{-\delta_{J}}=\overline Q^{-\delta_{I}}.
\label{eq:81}
\end{equation}

For instance, for any $I$ we can take $\delta_{J}=\delta_{I}$, so that
$\rho=0$. But there are other possibilities, e.g.,
$\overline T_{1}\overline Q_{1}^{-1}=\overline Q_{0}^{-1}$, so that for $n=2$, $k=1$
and $\delta_{I}=\delta_{0}$ we can take $\delta_{J}=\delta_{1}$, and $\rho=\delta_{0}-\delta_{1}=\alpha_{1}$.

Consider the connection matrix
\[
\Gamma^{(\mathbf{k}+\rho,\bm\beta+\delta_{J})}_{(\mathbf{k},\bm\beta)}=
\left(g_{-}^{(\mathbf{k},\bm\beta)}\right)\inv
\left(\overline
Q^{\,-\delta_{I}}\right)\left(g_{-}^{(\mathbf{k}+\rho,\bm\beta+\delta_{J})}\right).
\]
Denote by $\gamma_{bc}^{i,j (\mathbf{k},\bm\beta)}$ the coefficient of
$E_{bc}^{-i-1,j}$ in $g_{-}^{(\mathbf{k},\bm\beta)}$ and put 
$\gamma_{bc}^{(\mathbf{k},\bm\beta)}=\gamma_{bc}^{0,0
  (\mathbf{k},\bm\beta)}$. 
Using \eqref{eq:20} we find 
\begin{multline}\label{eq:51}
\Gamma^{(\mathbf{k}+\rho,\bm\beta+\delta_{J})}_{(\mathbf{k},\bm\beta)}=
\left(
    1-\sum_{i,j\ge0}\sum_{b,c}\gamma_{bc}^{i,j
      (\mathbf{k},\bm\beta)}E_{bc}^{-i-1,j} + \dots
\right) \times\\
\times
(-1)^{k-1}\left(
  \sum_{s\in\mathbb{Z}}
  \left(
    \sum_{i\in I}E_{ii}^{s+1,s}-\sum_{d\notin      I}E_{dd}^{s,s})
  \right)
\right) \times\\ \times
\left(
    1+\sum_{\ell,m\ge0}\sum_{e,f}\gamma_{ef}^{\ell,m
      (\mathbf{k}+\rho,\bm\beta+\delta_{J})}E_{ef}^{-\ell-1,m} \right).
\end{multline}
Now we can write, see \eqref{eq:12},
\[
\Gamma^{(\mathbf{k}+\rho,\bm\beta+\delta_{J})}_{(\mathbf{k},\bm\beta)}=
g_{0+}^{(\mathbf{k},\bm\beta)}
\left(
  \overline Q^{\,-\delta_{J}}
\right)
\left(
g_{0+}^{(\mathbf{k}+\rho,\bm\beta+\delta_{J})}
\right)\inv,
\]
which shows, because all $j_{s}\ge0$,  that the connection matrix is
 nonnegative, in particular,
the coefficient of $E_{bc}^{-1,0}$ is zero. Using
$E_{ab}^{k,\ell}E_{cd}^{m,n}=E_{a,d}^{k,n}\delta_{bc}\delta_{\ell m}$ we
find from \eqref{eq:51} for $b,c\notin I$:
\begin{equation}
\gamma_{bc}^{(\mathbf{k},\bm\beta)}-\gamma_{bc}^{(\mathbf{k}+\rho,\bm\beta+\delta_{J})}-
\sum_{i\in I}\gamma_{bi}^{(\mathbf{k},\bm\beta)}\gamma_{ic}^{(\mathbf{k}+\rho,\bm\beta+\delta_{J})}=0.
\label{eq:52}
\end{equation}
Now $\gamma_{bc}^{(\mathbf{k},\bm\beta)}$ is the coefficient of
$z\inv w\inv$ of $g_{bc}^{(\mathbf{k},\bm\beta)}(z,w)$ calculated in
Theorem \ref{thm:gminus}. Hence, if $b\ne c$,
\begin{equation}
\gamma_{bc}^{(\mathbf{k},\bm\beta)}=(-1)^{\mathbf{k}\cdot(\delta_{b}-\delta_{c})}
\epsilon(\delta_b,\delta_c)\epsilon(\delta_{b}-\delta_{c},\mathbf{k})
\frac
{\tau_{\mathbf{k}+\delta_{c}-\delta_{b}}^{(\bm\beta)}}
{\tau_{\mathbf{k}}^{(\bm\beta)}},
\label{eq:53}
\end{equation}
and this gives in 
\eqref{eq:52}
\begin{multline*}
  (-1)^{\mathbf{k}\cdot (\delta_{b}-\delta_{c})} 
\epsilon(\delta_b,\delta_c)\epsilon(\delta_{b}-\delta_{c},\mathbf{k})
\left[\frac{\tau_{\mathbf{k}+\delta_{c}-\delta_{b}}^{(\bm\beta)}}
    {\tau_{\mathbf{k}}^{(\bm\beta)}}- 
    (-1)^{\rho\cdot(\delta_{b}-\delta_{c})}
    \epsilon(\delta_{b}-\delta_{c},\rho)
    \frac{\tau_{\mathbf{k}+\rho+\delta_{c}-\delta_{b}}^{(\bm\beta+\delta_{J})}}
    {\tau_{\mathbf{k}+\rho}^{(\bm\beta+\delta_{J})}} 
    \right.\\
\left. 
    -\sum_{i\in I}
    (-1)^{\rho\cdot(\delta_{i}-\delta_{c})}\epsilon(\delta_b,\delta_c)\epsilon(\delta_b,\delta_i)\epsilon(\delta_i,\delta_c)
    \epsilon(\delta_{i}-\delta_{c},\rho)
    \frac{\tau_{\mathbf{k}+\delta_{i}-\delta_{b}}^{(\bm\beta)}}{\tau_{\mathbf{k}}^{(\bm\beta)}}
    \frac{\tau_{\mathbf{k}+\rho+\delta_{c}-\delta_{i}}^{(\bm\beta+\delta_{J})}}{\tau_{\mathbf{k}+\rho}^{(\bm\beta+\delta_{J})}}
  \right]=0.
\end{multline*}
Bringing all terms under a common denominator we have\\[.1cm]
\fbox{
\begin{minipage}{0.97\linewidth}
{
\begin{thm}[Short Relations]\label{thm:short-relations}
For all $k$ element subsets $I$ of
$\{0,1,\dots,n-1\}$, where $1\le k\le n-2$, pick
$\delta_{J}=\sum_{a=0}^{n-1}\delta_{a}\in \mathbb{Z}^{n}$ such that 
$j_{a}\ge 0$, $\sum_{a=0}^{n-1}j_{a}=k$. Let
$\rho=\delta_{I}-\delta_{J}$. Then, for all $b,c\notin I, b\ne c$,
\begin{multline*}
  \tau_{\mathbf{k}+\delta_{c}-\delta_{b}}^{(\bm\beta)}\tau_{\mathbf{k}+\rho}^{(\bm\beta+\delta_{J})}-
 (-1)^{\rho\cdot(\delta_{b}-\delta_{c})}
    \epsilon(\delta_{b}-\delta_{c},\rho,)
  \tau_{\mathbf{k}}^{(\bm\beta)}\tau_{\mathbf{k}+\rho+\delta_{c}-\delta_{b}}^{(\bm\beta+\delta_{J})}
  \\
  - \sum_{i\in  I}
    (-1)^{\rho\cdot(\delta_{i}-\delta_{c})}
    \epsilon(\delta_{i}-\delta_{c},\rho)\epsilon(\delta_b,\delta_c)\epsilon(\delta_b,\delta_i)\epsilon(\delta_i,\delta_c)
\tau_{\mathbf{k}+\delta_{i}-\delta_{b}}^{(\bm\beta)}\tau_{\mathbf{k}+\rho+\delta_{c}-\delta_{i}}^{(\bm\beta+\delta_{J})}
  =0.
\end{multline*}
  Here $\epsilon$ is the cocycle \eqref{eq:71} for $\mathbb{Z}^{n}$.
\end{thm}
}
\end{minipage}}
\ \\[.2cm]

 \section{The Long Relations.}
\label{sec:long-relations}

In the previous section we showed that $\tau$-functions of type $nT$
satisfy bilinear equations of length $2+k$, where $k=1,2,n-2$. So
these equations are of length $3,4,\dots, n$.  In particular, for $n=2$
this construction does not give any equations at all! However, we
claim that, for all $n$, there are also equations of length $n+1$, so there are
three term relations for $2T$ $\tau$-functions. We will discuss these
equations in this section.

To obtain these ``long'' equations, pick a nonempty subset
$I\subset\{0,1,\dots,n-1\}$ (of cardinality less than $n$) and let $J$ be the complement of $I$
in $\{0,1,\dots,n-1\}$. As in the previous section,
$\delta_{I}=\sum_{a\in I}\delta_{a}$ and
$\delta_{J}=\sum_{b\in J}\delta_{b}$.  Consider the
connection matrix
\[
\Gamma^{(\mathbf{k},\bm\beta+\delta_{ J})}_{(\mathbf{k},\bm\beta +\delta_{I})}.
\]
By \eqref{eq:59} and \eqref{eq:12} we have
\begin{equation}
  \label{eq:62}
  \begin{aligned}
    \Gamma^{(\mathbf{k},\bm\beta
      +\delta_{ J})}_{(\mathbf{k},\bm\beta+\delta_{I})}
&=\left(g_{-}^{(\mathbf{k},\bm\beta+\delta_{I})}\right)\inv
\left(
\overline  Q^{\,\delta_{I}-\delta_{ J}}
\right)
\left(g_{-}^{(\mathbf{k},\bm\beta+\delta_{ J})}\right)=\\
&=\left(g_{0+}^{(\mathbf{k},\bm\beta+\delta_{I})}\right)
\left(\overline
  Q^{\,\delta_{I}-\delta_{J}}\right)\left(g_{0+}^{(\mathbf{k},\bm\beta+\delta_{ J})}\right)\inv.
  \end{aligned}
\end{equation}
Note that, in contrast with the connection matrices in the
previous section,
$\Gamma^{(\mathbf{k},\bm\beta+\delta_{ J})}_{(\mathbf{k},\bm\beta
  +\delta_{I})}$ is no longer nonnegative. In particular, for any $a\in I$, the coefficient of $E_{aa}^{-1,0}$ is in general, nonzero. Calculating this coefficient in two ways using \eqref{eq:62}
will give us the desired length $n+1$ bilinear equations.

The first equality in \eqref{eq:62} gives us, similar to
\eqref{eq:51},
\begin{multline}\label{eq:63}
\Gamma^{(\mathbf{k},\bm\beta+\delta_{ J})}_{(\mathbf{k},\bm\beta+\delta_{I})}=
\left(
    1-\sum_{i,j\ge0}\sum_{b,c}\gamma_{bc}^{i,j
      (\mathbf{k},\bm\beta+\delta_{I})}E_{bc}^{-i-1,j} 
\right) \times
\\
\times
\left( (-1)^{n-1}
  \sum_{k\in\mathbb{Z}}
  \left(
    \sum_{b\in J}E_{bb}^{k+1,k}+\sum_{a\in I}E_{aa}^{k-1,k})
  \right)
\right) \times\\ \times
\left(
    1+\sum_{\ell,m\ge0}\sum_{e,f}\gamma_{ef}^{\ell,m
      (\mathbf{k},\bm\beta+\delta_{ J})}E_{ef}^{-\ell-1,m} \right).
\end{multline}
Then the coefficient $X_{aa}$ of $E_{aa}^{-1,0}$ is 
\begin{equation}
  X_{aa}=(-1)^{n-1}
  \left(
    1 -\sum_{b\in  J}
    \gamma_{ab}^{(\mathbf{k},\bm\beta+\delta_{I})}
    \gamma_{ba}^{(\mathbf{k},\bm\beta+\delta_{ J})}
  \right).
\label{eq:64}
\end{equation}
Using \eqref{eq:53} we can write this as 
\begin{equation}
  \label{eq:76}
  X_{aa}=(-1)^{n-1}
  \left(
    1 +
    \sum_{b\in  J}
    \frac{\tau_{\mathbf{k}+\delta_{b}-\delta_{a}}^{(\bm\beta+\delta_{I})}}
    {\tau_{\mathbf{k}}^{(\bm\beta+\delta_{I})}}
    \frac{\tau_{\mathbf{k}+\delta_{a}-\delta_{b}}^{(\bm\beta+\delta_{J})}}
    {\tau_{\mathbf{k}}^{(\bm\beta+\delta_{J})}}
  \right).
\end{equation} 
(Here, we have used 
\begin{equation*}(-1)^{\mathbf{k}\cdot(\delta_a-\delta_b)}\epsilon(\delta_a,\delta_b)\epsilon(\delta_a-\delta_b,\mathbf{k})(-1)^{\mathbf{k}\cdot (\delta_b-\delta_a)}\epsilon(\delta_b,\delta_a)\epsilon(\delta_b-\delta_a,\mathbf{k})=\\ \epsilon(\delta_a,\delta_b)\epsilon(\delta_b,\delta_a)=-1,\end{equation*} since $a\ne b$, so either $\epsilon(\delta_a,\delta_b)=-1$ and $\epsilon(\delta_b,\delta_a)=1$ or vice versa.)

On the other hand, we can use the second expression for the connection
matrix in \eqref{eq:62} to calculate the coefficient of
$E_{aa}^{-1,0}$. Write
\[
g_{0+}^{(\mathbf{k},\bm\beta)}=\sum_{b,c}\sum_{i,j}\bar\gamma_{bc}^{i,j
(\mathbf{k},\bm\beta)}E_{bc}^{i,j},\quad
(g_{0+}^{(\mathbf{k},\bm\beta)})\inv=\sum_{b,c}\sum_{i,j}\tilde\gamma_{bc}^{i,j
(\mathbf{k},\bm\beta)}E_{bc}^{i,j}
\]
Then we have
\begin{equation}
X_{aa}=(-1)^{n-1}
\left(
\sum_{a^{\prime\prime}\in I}
\bar\gamma^{-1,-1(\mathbf{k},\bm\beta+\delta_{I})}_{aa^{\prime\prime}}\tilde\gamma^{0,0(\mathbf{k},\bm\beta+\delta_{J})}_{a^{\prime\prime}a^{\prime}}
\right).
\label{eq:65}
\end{equation}
We will now express (some of) the $\overline\gamma,\tilde\gamma$ variables in terms of matrix elements, which we will then use to write $X_{aa}$ entirely in terms of $\tau$-functions.

\begin{lem}
  \label{lem:g0+I}
  \begin{align}
 \tag{{A}}  \label{eq:66}
    \overline\gamma^{\,-1,-1(\mathbf{k},\bm\beta)}_{aa^{\prime}}&=
       \frac{
             \langle Q_{a}v_{0},
              g^{(\mathbf{k},\bm\beta)}Q_{a^{\prime}}v_{0}
               \rangle}
            {\tau_{\mathbf{k}}^{(\bm\beta)}},\\
  \tag{{B}}  \label{eq:67}
\tilde\gamma^{0,0(\mathbf{k},\bm\beta)}_{aa^{\prime}}&=
\frac{\langle Q_{a^\prime}\inv v_{0},
              g^{(\mathbf{k},\bm\beta)}Q_{a}\inv v_{0}
               \rangle}
            {\tau_{\mathbf{k}}^{(\bm\beta)}}
  \end{align}
\end{lem}

\begin{proof}
  For simplicity let us start by ignoring
  for the moment the dependence on $(\mathbf{k},\bm\beta)$. So let
  $g_{0+}$ be the nonnegative component in the Gauss factorization of
  an infinite matrix $g=g_{-}g_{0+}$, where $g$ belongs to
  $GL_{\infty}^{(n)}$ (or some completion). So $g_{0+}$ is an infinite
  matrix with entries $n\times n$ blocks, $\overline\gamma,\tilde\gamma$:
\[
g_{0+}=\sum_{i,j\ge0}\bar
  \gamma_{ij}E_{ij}+\sum_{k\in\mathbb{Z}\atop \ell<0}\bar \gamma_{k\ell}E_{k\ell},
\]
where $\bar\gamma_{ij}=\sum_{bc}\bar\gamma_{bc}^{i,j}E_{bc}$.
Pictorially we have
\begin{equation}
g_{0+}=
\begin{bmatrix}
  \ddots&\vdots&\vdots&\vdots&\vdots&\vdots&\dots\\
  \dots    &\bar\gamma_{1,2}&\bar\gamma_{1,1}&\bar\gamma_{1,0}&\bar\gamma_{1,-1}&\bar\gamma_{1,-2}&\dots\\
  \dots    &\bar\gamma_{0,2}&\bar\gamma_{0,1}&\bar\gamma_{0,0}&\bar\gamma_{0,-1}&\bar\gamma_{0,-2}&\dots\\
  \dots    &0&0&0&\bar\gamma_{-1,-1}&\bar\gamma_{-1,-2}&\dots\\
  \dots    &0&0&0&\bar\gamma_{-2,-1}&\bar\gamma_{-2,-2}&\dots\\
\dots&\vdots&\vdots&\vdots&\vdots&\vdots&\ddots
\end{bmatrix}=
\begin{bmatrix}
  A&B\\0&D
\end{bmatrix}
.
\label{eq:68}
\end{equation}
It is convenient to think of these infinite block matrices as usual
scalar valued, infinite matrices, by identifying
$E_{bc}^{i,j}=E_{ij}\otimes E_{bc}\mapsto
\mathcal{E}_{b+ni,c+nj}$. This corresponds to identifying
$H^{(n)}=\mathbb{C}^{n}\otimes \mathbb{C}[z,z\inv]$ with
$H=\oplus_{s\in\mathbb{Z}}\mathbb{C}\epsilon_{s}$ via
$e_{a}z^{k}\mapsto \epsilon_{a+nk}$. This induces an identification of
the semi-infinite vacuum vector in $F^{(n)}$,
$v_{0}=e_{0}\wedge e_{1}\wedge \dots \wedge e_{n-1}\wedge
ze_{0}\wedge\dots$, with the vacuum vector
$w_{0}=\epsilon_{0}\wedge\epsilon_{1}\wedge\dots$ in $F^{(1)}$. Below, we will occasionally make these identifications and it will be clear when we are doing this based on whether we denote the vacuum vector by $v_0$ or $w_0$.

The $\tau$-functions are infinite determinants: if $g=g_{-}g_{0+}$, then
\[
\tau(g)=\langle v_{0}, gv_{0}\rangle=\langle w_{0},g_{0+}w_{0}\rangle=\det(A).
\]
Here (and below) we use that $g_{-}$ is of the form $1+X$, $X\colon
H^{(n)}_{+}\to H^{(n)}_{-}$, and multiplying by $g_{-}$ does not
change determinants.

There are $n$ distinct fermionic translation operators. They act on
the vacuum by
\begin{align*}
Q_{a}w_{0}&=\epsilon_{a-n}\wedge
w_{0}=\epsilon_{a-n}\wedge\epsilon_{0}\wedge\epsilon_{1}\wedge\dots,
\\
Q_{a}\inv w_{0}&=i(\epsilon_{a})w_{0}=
(-1)^{a}\epsilon_{0}\wedge\epsilon_{1}\wedge\dots\wedge\xcancel{\epsilon_{a}}\wedge\dots.
\end{align*}
Now
\[
\langle Q_{a}w_{0},gQ_{a^{\prime}}w_{0}\rangle=\langle
Q_{a}w_{0},g_{0+}Q_{a^{\prime}}w_{0}\rangle=
\bar\gamma^{-1,-1}_{aa^{\prime}}\det(A),
\]
and
\[
\bar\gamma^{-1,-1}_{aa^{\prime}}=
\frac{\langle Q_{a}w_{0},gQ_{a^{\prime}}w_{0}\rangle}
{\tau(g)}.
\]
In the same way we find that
\[
\bar\gamma^{-1,-1(\mathbf{k},\bm\beta)}_{aa^{\prime}}=
\frac{\langle Q_{a}w_{0},g^{(\mathbf{k},\bm\beta)}Q_{a^{\prime}}w_{0}\rangle}
{\tau(g^{(\mathbf{k},\bm\beta)})}.
\]

Next consider the inverse of the block $A$ in $g_{0+}$, again ignoring
for a moment the dependence on $(\mathbf{k},\bm\beta)$. The 
component $\tilde\gamma_{aa^{\prime}}^{00}$ of $g_{0+}\inv$  is of
course also the $(A\inv)_{aa^{\prime}}^{00}$ component of $A\inv$.
On the other hand 
\[
\left(A\inv\right)_{aa^{\prime}}^{00}=(-1)^{a+a^{\prime}}\det\left( A^{[a0]}_{[a^{\prime}0]}\right)/\det(A),
\]
where $A^{[a0]}_{[a^{\prime}0]}$ is the matrix obtained from $A$ by
deleting the row and column containing the $E_{aa^{\prime}}^{0,0}$ entry. Now
\[
(-1)^{a+a^{\prime}}\det(A^{[a0]}_{[a^{\prime}0]})=\langle Q_{a^{\prime}}\inv
v_{0},g_{0+}Q_{a}\inv v_{0}\rangle
\]
so that
\[
\tilde\gamma_{aa^{\prime}}^{00}=\frac{\langle Q_{a}Q_{a^{\prime}}\inv v_{0},Q_{a}gQ_{a}\inv v_{0}\rangle}{\tau(g)}
\]
In the same way we find
\[
\tilde\gamma_{aa^{\prime}}^{00(\mathbf{k},\bm\beta)}=\frac{\langle
  Q_{a}Q_{a^{\prime}}\inv v_{0},Q_{a}g^{(\mathbf{k},\bm\beta)}Q_{a}\inv
  v_{0}\rangle}
{\tau_{\mathbf{k}}^{(\bm\beta)}(g)}
\]
\end{proof}

To find \eqref{eq:65} we need to calculate a product of matrix
elements.

\begin{lem} \label{lem:QQtau}
  \begin{equation*}
    \langle Q_{a} v_{0},
    g^{(\mathbf{k},\bm\beta+\delta_{I})}Q_{a^{\prime\prime}} v_{0}
    \rangle \langle Q_{a^{\prime}}\inv v_{0},
    g^{(\mathbf{k},\bm\beta+\delta_{J})}Q_{a^{\prime\prime}}\inv v_{0}
    \rangle=\epsilon(\mathbf{k}+\delta_{a^{\prime\prime}},\delta_{a}-\delta_{a^{\prime}})
\tau_{\mathbf{k}+\delta_{a^{\prime\prime}}-\delta_{a}}^{(\bm\beta+\delta_{I}-\delta_{a^{\prime\prime}})}
\tau_{\mathbf{k}+\delta_{a^{\prime}}-\delta_{a^{\prime\prime}}}^{(\bm\beta+\delta_{J}+\delta_{a^\prime\prime})}.
\end{equation*}
\end{lem}

\begin{proof}
Let $A= \langle
  Q_{a} v_{0},
  g^{(\mathbf{k},\bm\beta+\delta_{I})}Q_{a^{\prime\prime}}  v_{0}
  \rangle 
\langle 
  Q_{a^{\prime}}\inv v_{0},
  g^{(\mathbf{k},\bm\beta+\delta_{J})}Q_{a^{\prime\prime}}\inv  v_{0}
  \rangle$. Then by unitarity \eqref{eq:21} and \eqref{eq:80}
\[
A=    \langle
    Q_{a^{\prime\prime}}\inv Q_{a} v_{0},
    Q_{a^{\prime\prime}}\inv\left(T^{\mathbf{k}}\right)\inv g^{(\bm\beta+\delta_{I})}Q_{a^{\prime\prime}} v_{0}
    \rangle
    \langle 
    Q_{a^{\prime\prime}} Q_{a^{\prime}}\inv v_{0},
    Q_{a^{\prime\prime}}\left(T^{\mathbf{k}}\right)\inv g^{(\bm\beta+\delta_{J})}Q_{a^{\prime\prime}}\inv  v_{0}
    \rangle
\]
 Using Lemma \ref{Lem:GeneratorsTranslationGroupIdentities}, Part (1)
 and the unitarity of $T^{\mathbf{k}}$ \eqref{eq:22},
\[
A=\langle
    T^{\mathbf{k}}Q_{a^{\prime\prime}}\inv Q_{a} v_{0},
    Q_{a^{\prime\prime}}\inv g^{(\bm\beta+\delta_{I})}Q_{a^{\prime\prime}} v_{0}
    \rangle
    \langle 
    T^{\mathbf{k}}Q_{a^{\prime\prime}} Q_{a^{\prime}}\inv v_{0},
    Q_{a^{\prime\prime}} g^{(\bm\beta+\delta_{J})}Q_{a^{\prime\prime}}\inv  v_{0}
    \rangle.
\]
By the definition $T^{\mathbf{k}}=Q^{-\mathbf{k}}$ and \eqref{eq:80}
\[
A=\langle
    Q^{-\mathbf{k}}Q_{a^{\prime\prime}}\inv Q_{a} v_{0},
     g^{(\bm\beta+\delta_{I}-\delta_{a^{\prime\prime}})} v_{0}
    \rangle
    \langle 
    Q^{-\mathbf{k}}Q_{a^{\prime\prime}} Q_{a^{\prime}}\inv v_{0},
     g^{(\bm\beta+\delta_{J}+\delta_{a^{\prime\prime}})}  v_{0}
    \rangle.
\]
Finally using Lemma \ref{lem:TensorQ} and Corollary
\ref{cor:tensorQQ}, and properties of the cocycle from Lemma \ref{lem:propcocycle}
\begin{multline*}
  A=\epsilon(\mathbf{k}+\delta_{a^{{\prime}\prime}},\delta_{a}-\delta_{a^{\prime}})
  \langle Q^{-\mathbf{k}-\delta_{a^{\prime\prime}}+\delta_{a}} v_{0},
  g^{(\bm\beta+\delta_{I}-\delta_{a^{\prime\prime}})} v_{0} \rangle
  \times \\
 \times  \langle Q^{-\mathbf{k}+\delta_{a^{\prime\prime}}-\delta_{a^{\prime}}}\
  v_{0}, g^{(\bm\beta+\delta_{J}+\delta_{a^{\prime\prime}})} v_{0}
  \rangle = \\
=\epsilon(\mathbf{k}+\delta_{a^{\prime\prime}},\delta_{a}-\delta_{a^{\prime}}) \langle T^{\mathbf{k}+\delta_{a^{\prime\prime}}-\delta_{a}} v_{0},
  g^{(\bm\beta+\delta_{I}-\delta_{a^{\prime\prime}})} v_{0} \rangle
  \langle T^{\mathbf{k}+\delta_{a^{\prime}}-\delta_{a^{\prime\prime}}}\ v_{0},
  g^{(\bm\beta+\delta_{J}+\delta_{a^{\prime\prime}})} v_{0} \rangle=\\=\epsilon(\mathbf{k}+\delta_{a^{\prime\prime}},\delta_a-\delta_{a^{\prime}})\tau_{{\mathbf{k}+\delta_{a^{\prime\prime}}-\delta_a}}^{(\bm\beta+\delta_{I}-\delta_{a^{\prime\prime}})}\tau_{{\mathbf{k}+\delta_{a^{\prime}}-\delta_a^{\prime\prime}}}^{(\bm\beta+\delta_{J}+\delta_{a^{\prime\prime}})}\end{multline*}
\end{proof}

 By combining \eqref{eq:64}, \eqref{eq:65},  the lemmas
 \ref{lem:g0+I}, \ref{lem:QQtau} (for $a=a^{\prime}$) and \eqref{eq:53} we see that calculating the
 coefficient of $E_{aa}^{-1,0}$ in two ways gives:
 \begin{equation*}
1+
\sum_{b\in J}
\frac{\tau_{\mathbf{k}+\delta_{b}-\delta_{a}}^{(\bm\beta+\delta_{I})}}
       {\tau_{\mathbf{k}}^{(\bm\beta+\delta_{I})}}
\frac{\tau_{\mathbf{k}+\delta_{a}-\delta_{b}}^{(\bm\beta+\delta_{J})}}
       {\tau_{\mathbf{k}}^{(\bm\beta+\delta_{J})}}
=\sum_{a^{\prime\prime}\in I}
\frac{\tau_{\mathbf{k}+\delta_{a^{\prime\prime}}-\delta_{a}}^{(\bm\beta+\delta_{I}-\delta_{a^{\prime\prime}})}}
       {\tau_{\mathbf{k}}^{(\bm\beta+\delta_{I})}}
\frac{\tau_{\mathbf{k}+\delta_{a}-\delta_{a^{\prime\prime}}}^{(\bm\beta+\delta_{J}+\delta_{a^{\prime\prime}})}}
      {\tau_{\mathbf{k}}^{(\bm\beta+\delta_{J})}}.
 \end{equation*}
Clearing the denominators leads to:
\\[.2cm]
\fbox{
  \begin{minipage}{0.97\linewidth}
    {
      \begin{thm}[Long Relations]\label{thm:long-relations-1}
      For all $(\mathbf{k},\bm\beta)\in A_{n-1}\times
      \mathbb{Z}^{n}$ and all proper nonempty subsets $I\subset
      \{0,1,\dots,n-1\}$ with complement $J$ and $a\in I$ the
      $nT$ $\tau$-functions satisfy:
      \begin{equation*}
        \tau_{\mathbf{k}}^{(\bm\beta+\delta_{I})}
        \tau_{\mathbf{k}}^{(\bm\beta+\delta_{J})}+
        \sum_{b\in J}
        \tau_{\mathbf{k}+\delta_{b}-\delta_{a}}^{(\bm\beta+\delta_{I})}
        \tau_{\mathbf{k}+\delta_{a}-\delta_{b}}^{(\bm\beta+\delta_{J})}=
        \sum_{a^{\prime\prime}\in I}
        \tau_{\mathbf{k}+\delta_{a^{\prime\prime}}-\delta_{a}}^{(\bm\beta+\delta_{I}-\delta_{a^{\prime\prime}})}
        \tau_{\mathbf{k}+\delta_{a}-\delta_{a\prime\prime}}^{(\bm\beta+\delta_{J}+\delta_{a^{\prime\prime}})}.
    \end{equation*}
  Here $\epsilon$ is the cocycle \eqref{eq:71} for $\mathbb{Z}^{n}$.
      \end{thm}
      }
    \end{minipage}
  }

\ \\[.2cm]

\section{$2T$ and $3T$ Relations.}
\label{sec:2t-relations}

For $n=2$ the $\tau$-functions are indexed by $(\mathbf{k},\bm\beta)\in
A_{1}\times \mathbb{Z}^{2}$ or equivalently the $\tau$-functions have
the form $\tau_{k}^{(\alpha,\beta)}$, where $(k,(\alpha,\beta))\in
\mathbb{Z}\otimes \mathbb{Z}^{2}$. In this case there are no short
relations. For the long relations we need to choose two nonempty
disjoint subsets $I,J\subset \{0,1\}$, so $I=\{0\},J=\{1\}$ or vice
versa. Either choice leads to the same equations:
\[
\tau_{k}^{(\alpha,\beta+1)}\tau_{k}^{(\alpha+1,\beta)}=
\tau_{k+1}^{(\alpha,\beta+1)}\tau_{k-1}^{(\alpha+1,\beta)}+
\tau_{k}^{(\alpha,\beta)}\tau_{k}^{(\alpha+1,\beta+1)}.
\]
For $n=3$ there are both short relations, of length 3 and long
relations, of length 4. In this case the $\tau$-function is of the
form $\tau_{k,\ell}^{(\alpha,\beta,\gamma)}$, with
$((k,\ell),(\alpha,\beta,\gamma))\in A_{2}\times \mathbb{Z}^{3}$. 

The short relations depend on the choice of a $k=1$ element subset $I$
of $\{0,1,2\}$, and a corresponding $\delta_J$. For example, if we choose $I=\{0\}$, then there are three choices for $\delta_J$:
\begin{enumerate}
\item $\delta_{J}=\delta_{0}\implies \rho=0$.
\item $\delta_{J}=\delta_{1}\implies \rho=\delta_{0}-\delta_{1}=\alpha_{1}$
\item $\delta_{J}=\delta_{2}\implies \rho=\delta_{0}-\delta_{2}=\alpha_{1}+\alpha_{2}$.
\end{enumerate}
Then we need to fix $b, c\notin I$, $b\ne c$. So there are many
possibilities. For instance, if $\rho=0$, $b=1,c=2$ Theorem
\ref{thm:short-relations} gives 
\[
  \tau_{k,\ell}^{(\alpha,\beta,\gamma)}\tau_{k,\ell-1}^{(\alpha+1,\beta,\gamma)}=
  \tau_{k,\ell-1}^{(\alpha,\beta,\gamma)}\tau_{k,\ell}^{(\alpha+1,\beta,\gamma)}+
  \tau_{k+1,\ell}^{(\alpha,\beta,\gamma)}\tau_{k-1,\ell-1}^{(\alpha+1,\beta,\gamma)}.
\]
The long relations depend on the choice of complementary subsets $I,J$
of
$\{0,1,2\}$, so there are 6 choices. For instance if we take $I=\{0\}$
the 4 term relation becomes 
\begin{equation*}
  \tau_{k,\ell}^{(\alpha+1,\beta,\gamma)}\tau_{k,\ell}^{(\alpha,\beta+1,\gamma+1)}=
  \tau_{k-1,\ell}^{(\alpha+1,\beta,\gamma)}\tau_{k+1,\ell}^{(\alpha,\beta+1,\gamma+1)}+
  \tau_{k-1,\ell-1}^{(\alpha+1,\beta,\gamma)}\tau_{k+1,\ell+1}^{(\alpha,\beta+1,\gamma+1)}+
  \tau_{k,\ell}^{(\alpha,\beta,\gamma)}\tau_{k,\ell}^{(\alpha+1,\beta+1,\gamma+1)}.
\end{equation*}

\section*{Acknowledgments}
The authors are grateful for travel support from the Simons Foundation, Collaboration Grant 245048. Addabbo expresses thanks for support from the Associate Alumnae of Douglass College and the Dr. Lois M. Lackner Mathematics Fellowship. 

\providecommand{\bysame}{\leavevmode\hbox to3em{\hrulefill}\thinspace}
\providecommand{\MR}{\relax\ifhmode\unskip\space\fi MR }

\providecommand{\MRhref}[2]{
  \href{http://www.ams.org/mathscinet-getitem?mr=#1}{#2}
}
\providecommand{\href}[2]{#2}

\end{document}